\documentclass[fullpage, 11pt]{article}

\usepackage[english]{babel}
\usepackage{amsfonts}
\usepackage{amssymb}
\usepackage{amsmath}
\usepackage{latexsym}
\usepackage{amsthm}
\usepackage{epsfig}
\usepackage{graphicx}
\usepackage{color}
\usepackage{subfigure}

\textwidth 15.5cm \textheight 21.5cm \topmargin 0cm \evensidemargin
0in \oddsidemargin 0in

\newcommand{\bb}{\mathbb}
\newcommand{\conv}{\mathrm{conv}}

\newcommand{\lin}{\mathrm{lin}}
\newcommand{\R}{\bb R}
\newcommand{\Q}{\bb Q}
\newcommand{\Z}{\bb Z}

\newcommand{\intr}{\mathrm{\bf int}}

\newcommand{\cl}{\mathrm{\bf cl}}
\newcommand{\bd}{\mathrm{\bf bd}}

\newcommand{\rec}{\mathrm{rec}}


\newcommand{\old}[1]{{}}

\newtheorem{prop}{Proposition}
\newtheorem{theorem}[prop]{Theorem}
\newtheorem{corollary}[prop]{Corollary}
\newtheorem{lemma}[prop]{Lemma}

\newtheorem{claim}{Claim}

\newtheorem{remark}[prop]{Remark}

\newtheorem{example}[prop]{Example}

\newenvironment{pf}{\begin{trivlist} \item[] {\em Proof:}}{\hfill $
\Box$
                       \end{trivlist}}
\newcommand{\sm}{\setminus}

\def\st{\,|\,}

\old{ \addtolength{\oddsidemargin}{-30pt}
\addtolength{\evensidemargin}{-30pt} \addtolength{\textwidth}{60pt}

\addtolength{\topmargin}{-22pt} \addtolength{\textheight}{32pt} }

\begin{document}
\title{Minimal inequalities for an infinite\\ relaxation of integer programs}

\author{%
Amitabh Basu \\
Carnegie Mellon University,
abasu1@andrew.cmu.edu\\\\
Michele Conforti \\
Universit\`a di Padova,
conforti@math.unipd.it \\ \\
G\'erard Cornu\'ejols \thanks{
Supported by   NSF grant CMMI0653419,
ONR grant N00014-03-1-0188 and ANR grant BLAN06-1-138894.} \\
Carnegie Mellon University and Universit\'e d'Aix-Marseille \\
gc0v@andrew.cmu.edu \\ \\
Giacomo Zambelli \\
Universit\`a di Padova,
giacomo@math.unipd.it
}

\date{April 17, 2009, revised November 19, 2009}

\maketitle

\begin{abstract}
We show that maximal $S$-free convex sets are polyhedra when $S$ is
the set of integral points in some rational polyhedron of $\R^n$.
This result extends a theorem of Lov\'asz characterizing maximal
lattice-free convex sets. Our theorem has implications in integer
programming. In particular, we show that maximal $S$-free convex
sets are in one-to-one correspondence with minimal inequalities.
\end{abstract}

\maketitle

\section{Introduction}

Consider a mixed integer linear program, and the optimal tableau of the linear programming
relaxation. We select $n$ rows of the tableau, relative to $n$ basic integer variables
$x_1,\ldots, x_n$.  Let $s_1,\ldots, s_m$ denote the nonbasic variables. Let $f_i\geq 0$ be the value of $x_i$ in the basic solution associated with the tableau, $i=1,\ldots,n$, and suppose $f\notin\Z^n$. The tableau
restricted to these $n$ rows is of the form
\begin{equation}\label{eq:tableau}x=f+\sum_{j=1}^m r^js_j,\quad x\geq 0\mbox{ integral},\, s\geq 0,\, \mbox { and } s_j\in \Z, j\in I,\end{equation}
where $r^j\in\R^n$, $j=1,\ldots, m$, and $I$ denotes the set of
integer nonbasic variables.

An important question in integer programming is to derive valid inequalities for~\eqref{eq:tableau}, cutting off the current infeasible solution $x=f$, $s=0$. We will consider a simplified model where the integrality conditions are relaxed on all nonbasic variables. On the other hand, we can present our results in a more general context, where the constraints $x\geq 0$, $x\in\Z^n$, are replaced by constraints $x\in S$, where $S$ is the set of integral points in some given rational polyhedron such that $\dim(S)=n$, i.e. $S$ contains $n+1$ affinely independent points. Recall that a polyhedron $Ax \leq b$ is {\em rational} if the matrix $A$ and vector $b$ have rational entries.

So we study the following model, introduced by Johnson~\cite{Johnson}.
\begin{equation}\label{eq:finite}x=f+\sum_{j=1}^m r^js_j,\quad x\in S,\, s\geq 0,\end{equation}
where $f\in\conv(S)\setminus\Z^n$.
Note that every inequality cutting off the point $(f,0)$ can be expressed in terms of the nonbasic variables $s$ only, and can therefore be written in the form $\sum_{j=1}^m \alpha_j s_j\geq 1$.

In this paper we are interested in ``formulas'' for deriving such inequalities. More formally, we are interested in functions $\psi\,:\,\R^n\rightarrow \R$ such that the inequality
$$\sum_{j=1}^m \psi(r^j) s_j\geq 1$$
is valid for~\eqref{eq:finite} for every choice of $m$ and vectors $r^1,\ldots,r^m\in\R^n$. We refer to such functions $\psi$ as {\em valid functions} (with respect to $f$ and $S$). Note that, if $\psi$ is a valid function and $\psi'$ is a function such that $\psi\leq\psi'$, then $\psi' $ is also valid, and the inequality $\sum_{j=1}^m \psi'(r^j) s_j\geq 1$ is implied by $\sum_{j=1}^m \psi(r^j) s_j\geq 1$. Therefore we only need to investigate (pointwise) minimal valid functions.

Andersen, Louveaux, Weismantel, Wolsey~\cite{AndLouWeiWol} characterize minimal valid functions for the case $n=2$, $S=\Z^2$. Borozan and Cornu\'ejols~\cite{BorCor} extend this result to $S=\Z^n$ for any $n$. These papers and a result of Zambelli~\cite{giacomo} show a one-to-one correspondence between minimal valid functions and maximal lattice-free convex sets with $f$ in the interior. These results have been further generalized in~\cite{BaCoCoZa}. Minimal valid functions for the case $S=\Z^n$ are intersection cuts~\cite{bal1}.

Our interest in model~\eqref{eq:finite} arose from a recent paper of Dey and Wolsey~\cite{DW}. They introduce the notion of {\em $S$-free
convex set} as a convex set without points of $S$ in its interior, and show the connection between valid functions and $S$-free convex sets with $f$ in their interior.

A class of valid functions can be defined as follows. A function $\psi$ is {\em positively homogeneous} if  $\psi(\lambda r)=\lambda(\psi r)$ for every $r\in\R^n$ and every $\lambda\geq 0$, and it is {\em subadditive} if $\psi(r)+\psi(r')\geq \psi(r+r')$ for all $r,r'\in\R^n$. A function $\psi$ is {\em sublinear} if it is positively homogeneous and subadditive. It is easy to observe that sublinear functions are also convex.

Assume that $\psi$ is a sublinear function such that the set \begin{equation}\label{eq:B_psi} B_\psi=\{x\in\R^n\st \psi(x-f)\leq 1\}\end{equation}
is $S$-free. Note that $B_\psi$ is closed and convex because $\psi$ is convex. Since $\psi$ is positively homogeneous, $\psi(0)=0$, thus $f$ is in the interior of $B_\psi$. We claim that $\psi$ is a valid function. Indeed, given any solution $(\bar x,\bar s)$ to~\eqref{eq:finite}, we have
$$\sum_{j=1}^m \psi(r^j)\bar s_j\geq \psi(\sum_{j=1}^m r^j\bar s_j)=\psi(\bar x-f)\geq 1,$$
where the first inequality follows from sublinearity and the last one follows from the fact that $\bar x$ is not in the interior of $B_\psi$.

Dey and Wolsey~\cite{DW} show that every minimal valid function $\psi$ is sublinear and $B_\psi$ is an $S$-free convex set with $f$ in its interior. In this paper, we prove that if $\psi$ is a minimal valid function, then $B_\psi$ is a {\em maximal $S$-free convex set}.

In Section~\ref{sec:S-free}, we show that maximal $S$-free convex sets are polyhedra.
Therefore a maximal $S$-free convex set $B\subseteq \R^n$ containing $f$ in its interior can be uniquely written in the form $B=\{x\in \R^n:\;a_i(x-f)\le 1,\;  i=1,\ldots,k\}$.
Let $\psi_B : \R^n \rightarrow \R$ be the  function defined by
\begin{equation}\label{eq:psiB}
\psi_B(r)= \max_{i=1,\ldots,k} a_ir, \quad \forall r\in \R^n.
\end{equation}

It is easy to observe that the above function is sublinear and $B=\{x\in\R^n\st \psi_B(x-f)\leq 1\}$. In Section~\ref{sec:Rf} we will prove that every minimal valid function is of the form $\psi_B$ for some maximal $S$-free convex set $B$ containing $f$ in its interior.
Conversely, if $B$ is a maximal
$S$-free convex set containing
$f$ in its interior, then $\psi_B$ is a minimal valid function.

\section{Maximal $S$-free convex sets}

\label{sec:S-free}

Let $S\subseteq \Z^n$ be the set of integral points in some rational
polyhedron of $\R^n$. We say that $B\subset\R^n$ is an {\em $S$-free
convex set} if $B$ is convex and does not contain any point of $S$
in its interior. We say that $B$ is a {\em maximal $S$-free convex
set} if it is an $S$-free convex set and it is not properly
contained in any $S$-free convex set. It follows from Zorn's lemma that every $S$-free convex set is contained in a maximal $S$-free convex set.

When $S=\Z^n$, an $S$-free convex set is called a {\em lattice-free
convex set}. The following theorem of Lov\'asz characterizes maximal
lattice-free convex sets. A linear subspace or cone in $\R^n$ is
{\em rational} if it can be generated by rational vectors, i.e.
vectors with rational coordinates.

\begin{theorem}\label{thm:lattice-free} {\em (Lov\'asz \cite{Lovasz})} A set $B\subset \R^n$ is a maximal lattice-free convex set if and only if one of the following holds:
\begin{itemize}
\item[(i)] $B$ is a polyhedron of the form $B= P+L$ where $P$ is a polytope, $L$ is a rational linear space, $\dim(B)=\dim(P)+\dim(L)=n$, $B$ does not contain any integral point in its interior and there is an integral point in the relative interior of each facet of $B$;
\item[(ii)] $B$ is a hyperplane of $\R^n$ that is not rational.
\end{itemize}
\end{theorem}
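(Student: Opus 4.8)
The plan is to establish the two implications separately. The ``if'' direction is comparatively routine; the ``only if'' direction is the heart of the matter, and it is organized according to whether $\dim(B)<n$ or $\dim(B)=n$.

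\textbf{Sufficiency.} Suppose $B$ is as in (i); it is lattice-free by hypothesis, so only maximality is at issue. If $B\subsetneq B'$ with $B'$ convex, pick $x_0\in B'\setminus B$; moving $x_0$ slightly along the segment toward a point $y\in\intr(B)$ (which exists since $\dim(B)=n$) we may assume $x_0\in\intr(B')\setminus B$. Writing $B=\{x\st a_ix\le b_i,\ i=1,\dots,k\}$ irredundantly, we have $a_ix_0>b_i$ for some $i$, and by hypothesis the facet $F_i=B\cap\{x\st a_ix=b_i\}$ contains an integral point $z$ in its relative interior. Since $z$ lies on no facet of $B$ other than $F_i$, near $z$ the set $B$ coincides with the halfspace $\{x\st a_ix\le b_i\}$, so adjoining $x_0$, which lies strictly on the other side, forces $z\in\intr(\conv(B\cup\{x_0\}))\subseteq\intr(B')$ --- contradicting that $B'$ is lattice-free. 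Now suppose $B$ is as in (ii): $B=H=\{x\st ax=b\}$ with $a$ not a rational direction. Then $H$ is lattice-free, as $\intr(H)=\emptyset$. If $H\subsetneq B'$ with $B'$ convex, choose $x_0\in B'\setminus H$ with, say, $ax_0=c>b$; then $B'\supseteq\conv(H\cup\{x_0\})\supseteq\{x\st b<ax<c\}$, a nonempty open slab. The additive subgroup $\{ax\st x\in\Z^n\}$ of $\R$ is not of the form $t\Z$ with $t\ge0$, else $a$ would be a rational direction, hence it is dense in $\R$, so the slab contains an integral point of $\intr(B')$ --- a contradiction.

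\textbf{Necessity when $\dim(B)\le n-1$.} Here $B$ lies in a hyperplane $H\supseteq\aff(B)$, which is lattice-free, so maximality yields $B=H$. If $H$ were rational we could write $H=\{x\st k'x=\beta\}$ with $k'\in\Z^n$ and $\beta\in\R$, and then $H$ would be properly contained in the lattice-free slab $\{x\st m\le k'x\le m+1\}$ for a suitable $m\in\Z$, contradicting maximality. So $H$ is non-rational, i.e.\ case (ii).

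\textbf{Necessity when $\dim(B)=n$.} First, $B$ is closed, since $\intr(\cl(B))=\intr(B)$ for full-dimensional convex sets, so $\cl(B)$ is lattice-free and maximality gives $B=\cl(B)$. For each $z\in\Z^n\setminus\intr(B)$ choose $a_z\neq0$ with $a_zx\le a_zz$ for all $x\in B$ --- a supporting hyperplane if $z\in\bd(B)$, a separating one otherwise --- and verify that $\bigcap_z\{x\st a_zx\le a_zz\}$ is lattice-free and hence, by maximality, equals $B$; thus $\intr(B)=\bigcap_z\{x\st a_zx<a_zz\}$ contains no integral point. Now Doignon's theorem (the integer analogue of Helly's theorem) says that, since this intersection of open halfspaces contains no lattice point, already at most $2^n$ of them do; the corresponding closed halfspaces define a lattice-free polyhedron $P\supseteq B$, so $B=P$ by maximality and $B$ is a polyhedron with at most $2^n$ facets. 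It remains to show (a) every facet of $B$ contains an integral point in its relative interior, and (b) $\rec(B)=:L$ is a rational linear subspace; granting these, $B=(B\cap L^\perp)+L=P+L$ with $P:=B\cap L^\perp$ a polytope and $\dim(P)+\dim(L)=n$, which is case (i). For (a): if a facet $F\subseteq\{x\st ax=b\}$ had no integral point in its relative interior, then (its normal being rational, and the values $az$, $z\in\Z^n$, near $b$ forming a discrete set) $F$ could be translated outward to $\{x\st ax\le b+\varepsilon\}$ while keeping $B$ lattice-free for small $\varepsilon>0$, contradicting maximality. For (b): if $\rec(B)$ were not a subspace it would have an extreme ray $r$, and translating along $r$ the integral points in the relative interiors of the facets adjacent to the minimal face in direction $r$ would either push an integral point into $\intr(B)$ or allow an enlargement of $B$; and once $L$ is known to be a subspace, rationality follows because $B\cap L^\perp$ is a bounded maximal lattice-free set whose facet hyperplanes, by the bounded case, are rational, forcing $L$ to be spanned by rational vectors. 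I expect the main obstacles to be the finiteness of the facet family --- where the strict-inequality form of Doignon's theorem must be handled carefully --- and the proof that $\rec(B)$ is a rational linear subspace; the two ``perturbation'' arguments (pushing a facet outward, translating along a recession ray) are quick to state but need real care to certify that lattice-freeness survives, and that is where most of the technical work lies.
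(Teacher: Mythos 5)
The paper does not prove Theorem~\ref{thm:lattice-free} at all: it cites Lov\'asz and refers to \cite{BaCoCoZa} for a complete proof, so there is no internal argument to compare against, and your proposal must be judged on its own. Your sufficiency argument and the case $\dim(B)<n$ are correct and standard. The full-dimensional necessity direction, however, has genuine gaps, the first of which is your use of Doignon's theorem. That theorem concerns \emph{finite} families of convex sets, while your family of open halfspaces $\{x \st a_zx<a_zz\}$ is indexed by all of $\Z^n$. The implication ``every subfamily of size $2^n$ has a common integral point, hence the whole family does'' fails for infinite families without a compactness hypothesis: in $\R^1$ the halfspaces $(k,\infty)$, $k\in\N$, pairwise share integral points yet have empty total intersection. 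Since $B$ may be unbounded, the witnessing integral points can escape to infinity in exactly this way; the issue is not the strict inequalities (finite Doignon holds for arbitrary convex sets) but the infinitude, and repairing it requires either first reducing to the bounded case (where only finitely many integral points are relevant and Doignon is not needed) or a genuine limiting argument over halfspaces containing a fixed ball inside $\intr(B)$.

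Your step (a) rests on the assertion that a facet normal of $B$ is rational, which is false: already in $\R^2$ there are maximal lattice-free triangles each of whose edges carries exactly one integral point in its relative interior and has irrational slope. The correct argument relaxes the facet inequality by $\varepsilon$ and uses that, after reducing to the bounded case, only finitely many integral points can enter the relaxation; either some relaxation remains lattice-free (contradicting maximality) or an integral point satisfies the facet equation with all other inequalities strict, i.e., lies in the relative interior of the facet. Step (b) is essentially missing. That $\rec(B)$ is a linear space does not follow from an ``extreme ray'' perturbation sketch; the standard route shows via Dirichlet (Lemma~\ref{lemma:half-line}, exactly as in Lemma~\ref{lemma:lineality} of the paper with $S=\Z^n$) that $B+\langle r\rangle$ is lattice-free for every $r\in\rec(B)$, hence equals $B$ by maximality, giving $-r\in\rec(B)$. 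Your rationality argument is then circular: $B\cap L^\perp$ is a maximal lattice-free body with respect to a lattice of $L^\perp$ only after one knows $L$ is rational, and it again invokes the false rational-normal claim. Rationality of $\lin(B)$ needs its own argument, e.g., showing that $B+V$ is lattice-free for the smallest rational subspace $V\supseteq\lin(B)$ by analyzing the closure of $\lin(B)+\Z^n$.
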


Lov\'asz only gives a sketch of the proof. A complete proof can be
found in \cite{BaCoCoZa}. The next theorem is
an extension of Lov\'asz' theorem to maximal $S$-free convex sets.

Given a convex set $K\subset \R^n$, we denote by $\rec(K)$ its
recession cone and by $\lin(K)$ its lineality space. Given a set
$X\subseteq \R^n$, we denote by $\langle X\rangle$ the linear space
generated by $X$.
Given a $k$-dimensional linear space $V$ and a subset $\Lambda$ of
$V$, we say that $\Lambda$ is a {\em lattice of $V$} if there exists
a linear bijection $f:\,\R^k\rightarrow V$ such that
$\Lambda=f(\Z^k)$.

\begin{theorem}\label{thm:S-free} Let $S$ be the set of integral points in some rational polyhedron of $\R^n$ such that $\dim(S)=n$. A set $B\subset \R^n$ is a maximal $S$-free convex set if and only if one of the following holds:
\begin{itemize}
\item[$(i)$] $B$ is a polyhedron such that $B\cap \conv(S)$ has nonempty interior,  $B$ does not contain any point of $S$ in its interior and there is a point of $S$ in the relative interior of each of its facets.
\item[$(ii)$] $B$ is a half-space of $\R^n$ such that $B\cap \conv(S)$ has empty interior and the boundary of $B$ is a supporting hyperplane of $\conv(S)$.
\item[$(iii)$] $B$ is a hyperplane of $\R^n$ such that $\lin(B)\cap\rec(\conv(S))$ is not rational.
\end{itemize}
Furthermore, if (i) holds, the recession cone of $B\cap\conv(S)$ is rational and it is contained in the lineality space of $B$.
\end{theorem}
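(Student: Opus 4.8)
Write $Q=\conv(S)$; by Meyer's theorem $Q$ is a rational polyhedron, full-dimensional because $\dim(S)=n$, so $Q=P_0+C$ with $P_0$ a polytope and $C=\rec(Q)$ a rational polyhedral cone. The key elementary facts are that $s+r\in S$ whenever $s\in S$ and $r\in C\cap\Z^n$ (so, when $C\neq\{0\}$, $S$ runs off to infinity along every rational ray of $C$, and a Kronecker-type density argument forces points of $S$ arbitrarily close to suitable affine subspaces), and that a maximal $S$-free set $B$ is closed (as $\cl(B)$ has the same interior) and, when $0\in\intr(B)$, can be written as $\{x\in\R^n\st a_ix\le 1,\ i\in\mathcal I\}$ for some a priori infinite family.

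\emph{Sufficiency.} In each of (i)--(iii), $B$ is $S$-free because its interior misses $S$, so only maximality must be verified. If $B$ is the polyhedron of (i) and $B\subsetneq B'$ with $B'$ convex, then $B'$ crosses the affine hull of some facet $F$ and the point of $S$ in $\relint(F)$ enters $\intr(B')$. If $B$ is the half-space of (ii), any proper convex superset contains points strictly on the $Q$-side of $\bd(B)$ in its interior, and since $\bd(B)$ supports $Q$ the density fact produces a point of $S$ there. If $B$ is the hyperplane of (iii), a proper convex superset contains a one-sided slab about $B$, and irrationality of $\lin(B)\cap C$ is exactly what makes points of $S$ accumulate on $B$ from within $Q$, so the slab's interior meets $S$.

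\emph{Necessity.} Let $B$ be maximal $S$-free. If $\intr(B)\cap\intr(Q)=\emptyset$: when $B$ is not full-dimensional it lies in a hyperplane $H$, maximality and $S$-freeness of $H$ give $B=H$, and then $\lin(H)\cap C$ must be irrational, for otherwise a suitable half-space or one-sided slab about $H$ would still be $S$-free, contradicting maximality --- this is (iii); when $B$ is full-dimensional, $B$ and $Q$ have disjoint interiors, hence a separating hyperplane, maximality forces $B$ to be the corresponding closed half-space, and maximality forces $\bd(B)$ to support $Q$ (else $B$ could be translated outward) --- this is (ii). Now assume $\intr(B)\cap\intr(Q)\neq\emptyset$ and translate so that $0$ lies in it. First, for $r\in\rec(B)\cap C\cap\Z^n$ the set $\conv(B\cup(B-r))$ is $S$-free (a point of $S$ in its interior, translated by $+r$, would lie in $\intr(B)$), so maximality gives $B-r\subseteq B$, i.e. $-r\in\rec(B)$; from this one deduces that $L:=\rec(B)\cap C=\rec(B\cap Q)$ is a rational cone contained in $\lin(B)$. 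Since $\rec(B\cap Q)=L$, the slice $(B\cap Q)\cap L^{\perp}$ is compact, and since $L$ is rational the image of $\Z^n$ modulo $L$ is discrete; hence only finitely many points of $S$, modulo $L$, lie in $B\cap Q$, and the same holds in any fixed thin neighbourhood of $\bd(B)$ inside $Q$. Using this finiteness one constructs an $S$-free polyhedron $B'\supseteq B$ --- the supporting half-spaces of $B$ at the finitely many $L$-classes of points of $S$ on $\bd(B)$, together with finitely many half-spaces that cut $\rec(B)$ down so as to block the directions along which $S$ approaches $B$ from outside $Q$ --- and maximality gives $B=B'$. Thus $B$ is a polyhedron; each facet must carry a point of $S$ in its relative interior (otherwise that facet could be pushed outward while staying $S$-free, again by the finiteness); and the ``furthermore'' is the already established fact that $\rec(B\cap Q)=L$ is rational and contained in $\lin(B)$.

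\emph{Main obstacle.} The core difficulty is the finiteness step together with the construction of $B'$: one must show that, modulo the lineality $L$, only finitely many points of $S$ --- those on $\bd(B)$ and those approaching $B$ from outside along $\rec(Q)$ --- actually constrain $B$, and that the resulting finite system of inequalities already defines an $S$-free set. This is precisely where the hypothesis that $S$ is the set of integer points of a \emph{rational} polyhedron is indispensable, and it is the analogue of the nontrivial finiteness step in Lov\'asz' proof of Theorem~\ref{thm:lattice-free}.
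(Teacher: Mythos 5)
Your outline agrees with the paper on the degenerate cases (the hyperplane case (iii) via projection onto the orthogonal complement of a rational lineality space, and the half-space case (ii) via separation), and you correctly identify the two load-bearing facts for case (i): that $\rec(B)\cap\rec(\conv(S))\subseteq\lin(B)$, and that a finiteness statement modulo a rational lineality space must drive both polyhedrality and the facet condition. But the two central steps are not actually proved, and the sketches you give for them would not close.

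First, the rationality of $L=\rec(B\cap\conv(S))$ does not follow from your symmetry argument. Showing that $-r\in\rec(B)$ for every \emph{integral} $r\in\rec(B)\cap\rec(\conv(S))$ says nothing about irrational recession directions: a priori $\rec(B)\cap\rec(\conv(S))$ could be an irrational ray or an irrational subspace containing no lattice points, and then both the claimed identity $L=\lin(B)\cap\rec(\conv(S))$ and the discreteness of $\Z^n$ modulo $L$ (on which your entire finiteness step rests) break down. The paper first upgrades your observation to all real $r$ via a Dirichlet argument (Lemmas~\ref{lemma:half-line} and~\ref{lemma:lineality}), and then obtains rationality only through a detour: it builds a rational polyhedron $P$ with $\conv(S)\subset\intr(P)$ and no integral points outside $S$, so that $K=B\cap P$ is lattice-free, embeds $K$ in a maximal lattice-free set $T$, and imports $\lin(T)=\rec(T)$ rational from Lov\'asz' theorem (Claims~\ref{claim:lin-K-rat} and~\ref{claim:rational-cone}). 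Second, and more seriously, your construction of the $S$-free polyhedron $B'\supseteq B$ is not an argument. Taking supporting half-spaces of $B$ at the finitely many $L$-classes of points of $S$ near $\bd(B)$ does not prevent the (infinitely many, even modulo $L$) points of $S$ far from $\bd(B)$ from landing in the interior of the resulting finite intersection, and ``finitely many half-spaces that block the directions along which $S$ approaches $B$'' is not specified in a way that could be checked. The paper's actual mechanism is different and is the heart of the proof: writing the maximal lattice-free polyhedron as $T=P\cap\bigcap_{i=1}^k H_i$, it shows $\intr(B)\cap(\bar H_i\cap\conv(S))=\emptyset$ for each $i$, separates to get half-spaces $K_i\supseteq B$, and then every $y\in S$ lies in $\intr(P)\setminus\intr(T)$, hence in some $\bar H_i$, hence outside $\intr(K_i)$ --- so $\bigcap_i K_i$ is $S$-free and equals $B$ by maximality. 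In short, the proposal is missing the reduction to Lov\'asz' theorem that supplies both the rationality of $L$ and the finite family of half-spaces; the step you flag as the ``main obstacle'' is indeed the theorem, and it is not overcome here.
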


We illustrate case (i) of the theorem in the plane in Figure~\ref{fig:S-free}.
The question of the polyhedrality of maximal $S$-free convex sets was raised by Dey and Wolsey~\cite{DW}. They proved that this is the case for a maximal $S$-free convex set $B$, under the assumptions that $B\cap \conv(S)$ has nonempty interior and that
the recession cone of $B\cap\conv(S)$ is finitely generated and rational. Theorem~\ref{thm:S-free} settles the question in general.
\begin{figure}[htbp]\label{fig:S-free}
\centering
 \includegraphics[height=2.75in]{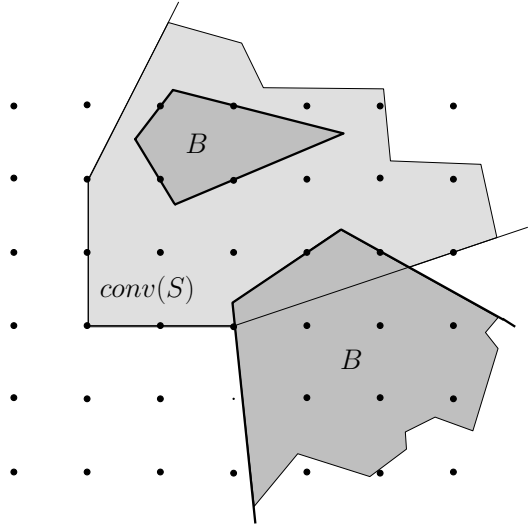}
\caption{Two examples of $S$-free sets in the plane (case (i) of Theorem~\ref{thm:S-free}). The light gray region indicates $\conv(S)$
and the dark grey regions illustrate the $S$-free sets.
A jagged line indicates that the region extends to infinity.}
\end{figure}

To prove Theorem~\ref{thm:S-free} we will need the following lemmas. The first one is proved in~\cite{BaCoCoZa} and is an easy consequence of Dirichlet's theorem.

\begin{lemma}\label{lemma:half-line}
 Let $y\in \Z^n$ and $r\in \R^n$. For every $\varepsilon >0$ and $\bar\lambda\geq 0$, there exists an integral point at distance less than $\varepsilon$ from the half line
$\{y+\lambda r\st \lambda\geq \bar\lambda\}.$
\end{lemma}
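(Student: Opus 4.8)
The plan is to reduce the statement to Dirichlet's theorem on simultaneous Diophantine approximation, in the form: for every $\eta>0$ there exist a positive integer $q$ and a vector $p\in\Z^n$ with $\|qr-p\|<\eta$. Since $y\in\Z^n$, the distance from $y+\lambda r$ to $\Z^n$ equals the distance from $\lambda r$ to $\Z^n$, so it is enough to produce some $\lambda\ge\bar\lambda$ for which $\lambda r$ lies within $\varepsilon$ of the lattice $\Z^n$. If $\bar\lambda=0$ one simply takes $\lambda=0$, so assume from now on that $\bar\lambda>0$.

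First I would apply Dirichlet's theorem with $\eta:=\varepsilon/(\bar\lambda+1)$ to obtain an integer $q\ge 1$ and a vector $p\in\Z^n$ with $\|qr-p\|<\eta$. Thus $qr$ is close to the integral point $p$, but $q$ need not be as large as $\bar\lambda$; to remedy this we pass to a suitable integer multiple. Set $k_0:=\lceil\bar\lambda/q\rceil\ge 1$, let $\lambda:=k_0 q$ (so $\lambda\ge\bar\lambda$), and let $z:=y+k_0 p\in\Z^n$. Then $\|(y+\lambda r)-z\|=\|k_0qr-k_0p\|=k_0\,\|qr-p\|$.

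It remains only to bound $k_0\,\|qr-p\|$. Since $q\ge 1$ we have $k_0\le\bar\lambda/q+1\le\bar\lambda+1$, and therefore $k_0\,\|qr-p\|<(\bar\lambda+1)\eta=\varepsilon$, which is precisely the claimed conclusion, with $z$ the required integral point.

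There is no genuine obstacle here: the content is entirely Dirichlet's theorem, and the only point requiring care is that pushing the approximation $qr\approx p$ out to a value $\lambda\ge\bar\lambda$ forces a $k_0$-fold multiple, which inflates the approximation error by the factor $k_0\le\bar\lambda+1$; this is why the quality $\eta$ of the Diophantine approximation must be selected as a function of both $\varepsilon$ and $\bar\lambda$. (The rational case is automatically covered: if $r\in\Q^n$ one may take $q$ with $qr\in\Z^n$, so that $\|qr-p\|=0$ and the distance is $0$.)
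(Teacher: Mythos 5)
Your proof is correct, and it follows exactly the route the paper indicates: the paper does not reproduce a proof but cites \cite{BaCoCoZa} and describes the lemma as an easy consequence of Dirichlet's theorem on simultaneous Diophantine approximation, which is precisely your argument (approximate $qr$ by an integral vector $p$, pass to the multiple $k_0q\ge\bar\lambda$, and absorb the resulting factor $k_0\le\bar\lambda+1$ by choosing the approximation quality $\eta$ in terms of both $\varepsilon$ and $\bar\lambda$). The handling of the edge cases ($\bar\lambda=0$ and $r\in\Q^n$) and the bound $k_0\le\bar\lambda/q+1\le\bar\lambda+1$ are all sound.
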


\begin{lemma}\label{lemma:lineality} Let $B$ be an $S$-free convex set such that $B\cap \conv(S)$ has nonempty  interior. For every $r\in\rec(B)\cap\rec(\conv(S))$, $B+\langle r\rangle$ is $S$-free.
\end{lemma}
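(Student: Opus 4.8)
The plan is to show that if some point of $S$ were in the interior of $B + \langle r \rangle$, then we could find a point of $S$ in the interior of $B$ itself, contradicting $S$-freeness of $B$. So suppose for contradiction that $z \in S$ lies in $\intr(B + \langle r\rangle)$. Since $B + \langle r\rangle = B + \langle r \rangle$ and $r \in \rec(B)$, a point is in this set iff it lies on a line $\{w + \lambda r : \lambda \in \R\}$ for some $w \in B$; being in the interior means a whole neighborhood is covered, which (because $r\in\rec(B)$) translates into: there exists $\mu \in \R$ such that $z + \mu r \in \intr(B)$. Concretely, I would pick a point $p \in \intr(B)\cap\intr(\conv(S))$ (this intersection is nonempty and open, being the interior of $B\cap\conv(S)$), write $z$ as lying in the interior of $B+\langle r\rangle$ to get $z - \mu r \in \intr(B)$ for a suitable $\mu$, and note that by convexity the open segment between $p$ and $z-\mu r$ lies in $\intr(B)$.

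The key idea is then to replace $z - \mu r$, which need not be integral, by an actual lattice point. First, by adding a large positive multiple of $r$ I can arrange to work with $z + \lambda r$ for $\lambda$ large; precisely, since $r \in \rec(B)$ and $z - \mu r \in \intr(B)$, the entire half-line $\{z - \mu r + t r : t \geq 0\}$ lies in $B$, and a neighborhood of it intersected with $B$ stays in $\intr(B)$ near the starting point — more robustly, I would use the open-segment argument: the segment from $p$ to $z + \lambda r$ is contained in $\intr(B)$ for every $\lambda \geq -\mu$, because $p \in \intr(B)$, $z - \mu r \in B$, and $z - \mu r + (\lambda+\mu) r \in B$ by $r\in\rec(B)$, so convexity gives the open segment from $p$ in $\intr(B)$. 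Now apply Lemma~\ref{lemma:half-line} with $y = z$ (which is integral) and the vector $r$: for the half-line $\{z + \lambda r : \lambda \geq \bar\lambda\}$ and any $\varepsilon > 0$ there is an integral point $z'$ within distance $\varepsilon$ of this half-line. Since the half-line for large $\lambda$ stays close to the ray direction $r$, and since points $z + \lambda r$ all lie on segments with $p$ that are interior to $B$, for $\varepsilon$ small enough this integral point $z'$ will itself lie in $\intr(B)$. The one point requiring care: I must also ensure $z' \in S$, not merely $z' \in \Z^n$. For this I use that $r \in \rec(\conv(S))$: since $z \in S \subseteq \conv(S)$, the half-line $\{z + \lambda r : \lambda \geq 0\}$ lies in $\conv(S)$, and $\conv(S)$ is a rational polyhedron whose integral points are exactly $S$; a nearby integral point that lies in $\conv(S)$ is automatically in $S$, and I can force $z'$ into $\conv(S)$ by choosing $\varepsilon$ smaller than the "interior width" of $B\cap\conv(S)$ around the relevant segment, or alternatively by applying Lemma~\ref{lemma:half-line} inside a translate and using that $\conv(S)$ contains a full-dimensional neighborhood of an interior point of $B \cap \conv(S)$.

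The main obstacle is the bookkeeping in the previous step: making the two approximations compatible, i.e. choosing one $\varepsilon$ small enough that the integral point produced by Lemma~\ref{lemma:half-line} simultaneously lands in $\intr(B)$ and in $\conv(S)$. The clean way to handle this is to first fix an interior point $q \in \intr(B \cap \conv(S))$ and a radius $\delta > 0$ with the ball $\mathcal{B}(q,\delta) \subseteq B \cap \conv(S)$; then observe that for $\lambda$ large the segment from $q$ to a suitable point on the half-line through $z$ in direction $r$ has an $\varepsilon$-tube around it (for $\lambda$ in a bounded range) contained in $\intr(B)\cap\intr(\conv(S))$; apply Lemma~\ref{lemma:half-line} to land in that tube. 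Once the integral point $z'$ is located in $\intr(B)$ with $z' \in \conv(S)$, we get $z' \in S \cap \intr(B)$, contradicting that $B$ is $S$-free. Hence no point of $S$ lies in $\intr(B + \langle r\rangle)$, so $B + \langle r\rangle$ is $S$-free, as claimed.
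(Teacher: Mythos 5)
Your overall strategy is the same as the paper's: assume $z\in S\cap\intr(B+\langle r\rangle)$, slide $z$ along $\langle r\rangle$ to a point $z+\bar\lambda r\in\intr(B)$ (the paper justifies this step by a separation argument: if the line $z+\langle r\rangle$ missed $\intr(B)$, a separating hyperplane parallel to $r$ would separate it from all of $B+\langle r\rangle$; your justification of this step is only gestured at, but the idea is right), then invoke Lemma~\ref{lemma:half-line} to produce an integral point $z'$ within $\varepsilon$ of the half-line $\{z+\lambda r\st\lambda\geq\bar\lambda\}$, which therefore lies in $\intr(B)$, and finally argue $z'\in S$. The genuine gap is in that final step. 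You correctly identify that $z'\in\Z^n$ is not enough and that one needs $z'\in\conv(S)$, but your proposed mechanism --- shrinking $\varepsilon$ below the ``interior width'' of $\conv(S)$ around the relevant segment, or working in a tube around a segment toward an interior point $q$ --- does not work. The half-line $\{z+\lambda r\st\lambda\geq 0\}$ can lie entirely on the boundary of $\conv(S)$ (take $z$ on a facet of $\conv(S)$ and $r$ parallel to that facet), in which case no $\varepsilon$-tube around it is contained in $\conv(S)$, and your fallback of translating toward $q$ breaks the hypothesis of Lemma~\ref{lemma:half-line}, which requires the apex of the half-line to be an \emph{integral} point; a segment from $q$ to a far point of the half-line also has its guaranteed clearance inside $\conv(S)$ shrink to zero at the far end, so Dirichlet-type approximation along the original half-line cannot be forced to land in it.

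The correct resolution, and the one the paper uses, is arithmetic rather than geometric: since $\conv(S)$ is a \emph{rational} polyhedron, there exists $\varepsilon>0$ such that every integral point at distance at most $\varepsilon$ from $\conv(S)$ already belongs to $\conv(S)$ (an integral point violating a rational inequality $ax\leq b$ with integral data violates it by at least $1$, hence is at distance at least $1/\|a\|$ from the corresponding half-space). Since the half-line lies in $\conv(S)$ (because $z\in S$ and $r\in\rec(\conv(S))$), the point $z'$ produced by Lemma~\ref{lemma:half-line} for this smaller $\varepsilon$ lies in $\conv(S)\cap\Z^n=S$ as well as in $\intr(B)$, contradicting $S$-freeness of $B$. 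With this substitution your argument closes; without it, the proof does not go through in the boundary case.
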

\begin{proof}
Let $C=\rec(B)\cap\rec(\conv(S))$ and  $r\in C\sm\{0\}$. Suppose by contradiction that there exists $y\in S\cap \intr(B+\langle r\rangle)$. We show that $y \in \intr(B) + \langle r\rangle$. If not, $(y + \langle r\rangle) \cap \intr(B) = \emptyset$, which implies that there is a hyperplane $H$ separating the line $y + \langle r\rangle$ and $B + \langle r\rangle$, a contradiction. Thus there exists  $\bar\lambda$ such that  $\bar y=y+\bar\lambda r\in \intr(B)$, i.e. there exists $\varepsilon>0$ such that $B$ contains the open ball $B_\varepsilon(\bar y)$ of radius $\varepsilon$ centered at $\bar y$.
Since $r\in C \subseteq \rec(B)$, it follows that  $B_\varepsilon(\bar y)+\{\lambda r\st \lambda\geq 0\}\subset B$. Since $y\in\Z^n$, by Lemma~\ref{lemma:half-line} there exists $z\in \Z^n$ at distance less than $\varepsilon$ from the half line $\{y+\lambda r\st \lambda\geq \bar\lambda\}$. Thus $z\in B_\varepsilon(\bar y)+\{\lambda r\st \lambda\geq 0\}$, hence $z\in\intr(B)$. Note that the half-line $\{y+\lambda r\st \lambda\geq \bar\lambda\}$ is in $\conv(S)$, since $y\in S$ and $r\in\rec(\conv(S))$.
Since $\conv(S)$ is a rational polyhedron, for $\varepsilon>0$ sufficiently small every integral point at distance at most $\varepsilon$ from $\conv(S)$ is in $\conv(S)$. Therefore $z\in S$, a contradiction.
\end{proof}

\begin{proof}[Proof of Theorem~\ref{thm:S-free}] The proof of the ``if'' part is standard, and it is similar to the proof for the lattice-free case (see~\cite{BaCoCoZa}). We show the ``only if'' part. Let $B$ be a maximal $S$-free convex set. If $\dim(B)<n$, then $B$ is contained in some affine hyperplane $K$. Since $K$ has empty interior, $K$ is $S$-free, thus $B=K$ by maximality of $B$.
Next we show that $\lin(B)\cap\rec(\conv(S))$ is not rational.
Suppose not. Then the linear subspace
$L=\langle\lin(B)\cap\rec(\conv(S))\rangle$ is rational. Therefore
the projection $\Lambda$ of $\mathbb{Z}^n$ onto $L^\bot$ is a
lattice of $L^\bot$ (see, for example, Barvinok \cite{barv} p 284
problem 3). The projection $S'$ of  $S$ onto $L^\bot$ is a subset of
$\Lambda$. Let $B'$ be the projection of $B$ onto $L^\bot$. Then
$B'\cap\conv(S')$ is the projection of $B\cap\conv(S)$ onto
$L^\bot$. Since $B$ is a hyperplane, $\lin(B) = \rec(B)$. This
implies that $B'\cap\conv(S')$ is bounded : otherwise there is an
unbounded direction $d \in L^\bot$ in $\rec(B')\cap\rec(\conv(S'))$
and so $d + l \in \rec(B)\cap\rec(\conv(S))$ for some $l\in L$.
Since $\rec(B)\cap\rec(\conv(S))=\lin(B)\cap\rec(\conv(S))$, this
would imply that $d \in L$ which is a contradiction. Fix $\delta
> 0$. Since $\Lambda$ is a lattice and $S'\subseteq \Lambda$, there
is a finite number of points at distance less than $\delta$ from the
bounded set $B'\cap\conv(S')$ in $L^\bot$. It follows that there
exists $\varepsilon>0$ such that every point of $S'$ has distance at
least $\varepsilon$ from  $B'\cap\conv(S')$. Let $B''=\{v+w\st  v\in
B,\, w\in L^\bot, \|w\|\leq\varepsilon\}$. The set $B''$ is $S$-free
by the choice of  $\varepsilon$, but $B''$ strictly contains $B$,
contradicting the maximality of $B$. Therefore $(iii)$ holds when
$\dim(B)<n$. Hence we may assume $\dim(B)=n$. If $B\cap \conv(S)$
has empty interior, then there exists a hyperplane separating $B$
and $\conv(S)$ which is supporting for $\conv(S)$. By maximality of
$B$ case $(ii)$ follows.

Therefore we may assume that  $B\cap \conv(S)$ has nonempty interior. We show that $B$ satisfies $(i)$.

\begin{claim} \label{claim:P} There exists a rational polyhedron $P$ such that:\\
$i)$ $\conv(S)\subset\intr(P)$,\\
$ii)$ The set $K=B\cap P$ is lattice-free,\\
$iii)$ For every facet $F$ of $P$, $F\cap K$ is a facet of $K$,\\
$iv)$ For every facet $F$ of $P$, $F\cap K$ contains an integral point in its relative interior.
\end{claim}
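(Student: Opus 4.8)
\medskip
\noindent\textbf{Proof plan.}
The plan is to build $P$ by relaxing the facet inequalities of $\conv(S)$ outward by one unit and keeping only those relaxed inequalities that genuinely cut into $B$; properties $i)$ and $ii)$ then follow quickly, while $iii)$ and $iv)$ are the real content and rely on the maximality of $B$ together with Lemma~\ref{lemma:half-line}. Since $\conv(S)$ is the integer hull of a rational polyhedron and is full-dimensional, it admits a description $\conv(S)=\{x\st a_ix\le b_i,\ i\in N\}$ with each $a_i$ primitive integral and each $b_i$ integral (the latter because every facet of $\conv(S)$ contains a point of $S$). Put $H_i=\{x\st a_ix=b_i+1\}$ and fix $p\in\intr(B)\cap\intr(\conv(S))$, available because $B\cap\conv(S)$ is full-dimensional. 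I would take $P=\{x\st a_ix\le b_i+1,\ i\in N'\}$, where $N'\subseteq N$ is obtained from $N$ by repeatedly deleting any index that is redundant for the current polyhedron or whose facet fails to meet $\intr(B)$ in its relative interior (this terminates, $N$ being finite). As $P$ only relaxes $\conv(S)$, for $x\in\conv(S)$ and $i\in N'$ we have $a_ix\le b_i<b_i+1$, so $\conv(S)\subset\intr(P)$, which is $i)$.

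For $ii)$, let $z\in\Z^n\cap\intr(B)$; since $B$ is $S$-free and $\conv(S)\cap\Z^n=S$, we have $z\notin\conv(S)$, so $a_iz\ge b_i+1$ for some $i\in N$ (by integrality of $a_i,b_i,z$). The segment $[p,z]$ lies in $\intr(B)$ (the interior of a convex set is convex), and along it $a_ix$ rises from $a_ip<b_i$ to $\ge b_i+1$, so $H_i$ meets $\intr(B)$; one then checks that an index doing this job survives the deletion process, so $z\notin\intr(P)$ and hence $z\notin\intr(B)\cap\intr(P)=\intr(K)$, proving $K$ lattice-free. Verifying that the deletion process never destroys the index needed here is one point that requires care.

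The heart of the claim is $iii)$ and $iv)$, which I would prove together. Let $F$ be a facet of $P$, lying on some $H_i$. It suffices to produce $z^i\in\Z^n$ with $a_iz^i=b_i+1$, with $a_jz^i\le b_j$ for all $j\in N\sm\{i\}$, and with $z^i\in\intr(B)$: such a point lies in $\intr(B)$ and in the relative interior of the facet $F$, hence in $\relint(F\cap K)$, which at once forces $\dim(F\cap K)=n-1$ (so $F\cap K$ is a facet of $K$) and supplies the required integral point. Geometrically $z^i$ is an integral point one lattice step outside facet $i$ of $\conv(S)$ and inside $\intr(B)$. Its existence is forced by maximality: the set $U_i=\{x\in H_i\st a_jx<b_j+1\text{ for all }j\in N\sm\{i\}\}\cap\intr(B)$ is a nonempty, relatively open, $(n-1)$-dimensional convex subset of $H_i$ (this is why $i$ was retained in $N'$), while $H_i\cap\Z^n$ is a full-rank lattice of $H_i$ since $a_i$ is primitive and $b_i+1\in\Z$; if $U_i$ contained no such lattice point it would be ``thin'', and one could then enlarge $B$ slightly across $H_i$ over $U_i$ into an $S$-free convex set properly containing $B$, contradicting maximality.

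\emph{Main obstacle.} The delicate step is this last one: converting ``$U_i$ contains no lattice point'' into an explicit $S$-free convex set strictly larger than $B$. The enlargement adds a thin region lying on the side $\{a_ix>b_i\}$ of $H_i$, and since $S\subseteq\{a_ix\le b_i\}$ no point of $S$ lies in the new region; but one must also ensure that enlarging $B$ does not pull into its new interior a point of $S$ that was sitting on $\bd(B)$. Only finitely many points of $S$ are relevant here, and pinning them down and routing the enlargement clear of all of them --- using the rationality of $\conv(S)$ and Lemma~\ref{lemma:half-line} --- is the technical core of the argument. Once Claim~\ref{claim:P} is in hand, the intended continuation is to enlarge $K$ to a maximal lattice-free convex set, invoke Lov\'asz's Theorem~\ref{thm:lattice-free}, and use the points $z^i$ to recover the polyhedral structure of $B$.
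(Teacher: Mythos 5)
There is a genuine gap, and it is located exactly where you place the ``heart'' of the argument: the joint proof of $iii)$ and $iv)$. Your plan is to fix the facets of $P$ at the level $a_ix=b_i+1$ and to argue that maximality of $B$ forces an integral point of $H_i=\{x\st a_ix=b_i+1\}$ to lie in $U_i$. This is false. Take $n=2$, $S=\{x\in\Z^2\st x_2\geq 0\}$, and $B=\{x\st 0\leq x_1\leq 1\}$. Then $B$ is a maximal $S$-free convex set (it is $S$-free, and each of its two facets contains a point of $S$, e.g.\ $(0,0)$ and $(1,0)$, in its relative interior, so it satisfies case $(i)$ of Theorem~\ref{thm:S-free}). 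Here $N=\{1\}$ with $a_1=(0,-1)$, $b_1=0$, the facet of $P=\{x\st x_2\geq -1\}$ meets $\intr(B)$ in the nonempty relatively open segment $U_1=\{(x_1,-1)\st 0<x_1<1\}$, and $U_1$ contains no point of $H_1\cap\Z^2=\{(k,-1)\st k\in\Z\}$ --- yet no enlargement of $B$ is possible. So the implication ``$U_i$ contains no lattice point $\Rightarrow$ $B$ is not maximal'' fails, and with it your construction of $z^i$. (A secondary problem in the same step: a relatively open $(n-1)$-dimensional convex subset of $H_i$ avoiding the full-rank lattice $H_i\cap\Z^n$ need not be ``thin'' in any sense that permits an enlargement --- it is merely a lattice-free convex set of $H_i$, which can have arbitrarily large diameter. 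And you explicitly leave unproved the routing of the enlargement past the points of $S$ on $\bd(B)$, which you correctly identify as the technical core.)

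The paper avoids this entirely by not insisting that the facets of $P$ sit at the fixed level $b_i+1$ (it starts at $b_i+\tfrac12$, which makes $ii)$ immediate since no new integral points are admitted, whereas your level-$1$ relaxation forces the extra ``which index survives deletion'' argument you flag). Properties $iii)$ and $iv)$ are then obtained not from maximality of $B$ at all, but by an induction on the number of facets violating $iv)$: a facet of $B\cap P_i$ with no integral point in its relative interior is either \emph{deleted} (when dropping it introduces no integral point into the interior --- this is what happens in the example above, yielding $P=\R^2$) or \emph{translated outward} to pass through the nearest integral point in the interior of the relaxed set, which by minimality of that distance preserves lattice-freeness and creates a facet with an integral point in its relative interior. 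Maximality of $B$ is only used later, in Claims~\ref{claim:B-poly}--\ref{claim:facets}. To repair your proof you would need to replace the maximality contradiction by this delete-or-translate mechanism; as written, the step producing $z^i$ does not go through.
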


Since  $\conv(S)$ is a rational polyhedron, there exist integral $A$
and $b$ such that $\conv(S)=\{x\in\R^n\st Ax\leq b\}$. The set
$P'=\{x\in\R^n\st Ax\leq b+\frac 12 \mathbf{1}\}$ satisfies $i)$.
The set $B\cap P'$ is lattice-free since $B$ is $S$-free and $P'$
does not contain any point in $\Z^n\sm S$, thus $P'$ also satisfies
$ii)$. Let $\bar Ax\leq \bar b$ be the system containing all
inequalities of $Ax\leq b+\frac 12 \mathbf{1}$ that define facets of
$B\cap P'$. Let $P_0=\{x\in\R^n\st \bar Ax\leq \bar b\}$. Then $P_0$
satisfies $i), ii),iii)$. See Figure~\ref{fig:proof} for an
illustration.

\begin{figure}[htbp]\label{fig:proof}
\centering
\includegraphics[scale=.37]{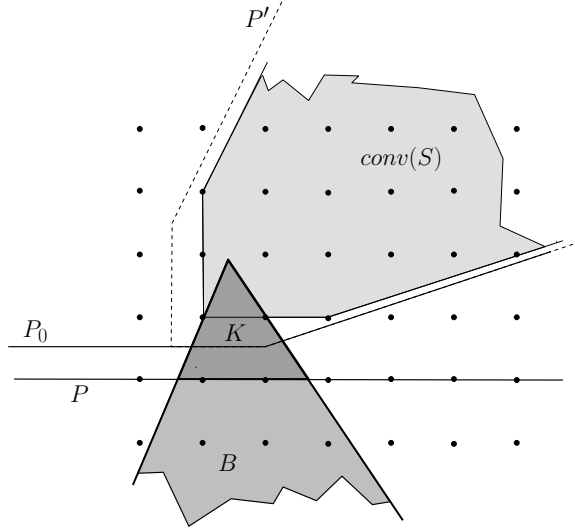}
\caption{Illustration for Claim~\ref{claim:P}.}
\end{figure}

It will be more convenient to write $P_0$ as intersection of the
half-spaces defining the facets of $P_0$, $P_0=\cap_{H\in
\mathcal{F}_0} H$. We construct a sequence of rational polyhedra
$P_0\subset P_1\subset\ldots\subset P_t$ such that $P_i$ satisfies
$i),ii),iii)$, $i=1,\ldots,t$, and such that $P_t$ satisfies $iv)$.
Given $P_i$, we construct $P_{i+1}$ as follows. Let  $P_i=\cap_{H\in
\mathcal{F}_i} H$, where $\mathcal F_i$ is the set of half spaces
defining facets of $P_i$. Let $\bar H$ be a half-space in
$\mathcal{F}_i$ defining a facet of $B\cap P_i$ that does not
contain an integral point in its relative interior; if no such $\bar
H$ exists, then $P_i$ satisfies $iv)$ and we are done. If
$B\cap\bigcap_{H\in \mathcal{F}_i\sm\{\bar H\}}H$ does not contain
any integral point in its interior, let
$\mathcal{F}_{i+1}=\mathcal{F}_i\sm\{\bar H\}$. Otherwise, since
$P_i$ is rational, among all  integral points in the interior of
$B\cap\bigcap_{H\in \mathcal{F}_i\sm\{\bar H\}}H$ there exists one,
say $\bar x$, at minimum distance from $\bar H$. Let $H'$ be the
half-space containing $\bar H$ with $\bar x$ on its boundary. Let
$\mathcal F_{i+1}=\mathcal{F}_i\sm\{\bar H\}\cap\{H'\}$. Observe
that $H'$ defines a facet of $P_{i+1}$ since $\bar x$ is in the
interior of $B\cap\bigcap_{H\in \mathcal{F}_{i+1}\sm\{H'\}}H$ and it
is on the boundary of $H'$. So $i),ii),iii)$ are satisfied and
$P_{i+1}$ has fewer facets that violate $iv)$ than $P_i$.\hfill
$\diamond$
\bigskip

Let $T$ be a maximal lattice-free convex set containing the set $K$ defined in Claim~\ref{claim:P}. As remarked earlier, such a set $T$ exists. By Theorem~\ref{thm:lattice-free}, $T$ is a polyhedron with an integral point in the relative interior of each of its facets. Let $H$ be a hyperplane that defines a facet of $P$. Since $K\cap H$ is a facet of $K$ with an integral point in its relative interior, it follows that $H$ defines a facet of $T$. This implies that $T\subset P$. Therefore we can write $T$ as
\begin{equation}\label{eq:T}T=P\cap\bigcap_{i=1}^k H_i,\end{equation} where $H_i$ are halfspaces. Let $\bar H_i=\R^n\sm\intr(H_i)$, $i=1,\ldots, k$.

\begin{claim}\label{claim:B-poly} $B$ is a polyhedron.
\end{claim}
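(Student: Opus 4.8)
\medskip
\noindent\textbf{Proof proposal.} The plan is to prove the stronger statement that $B=\bigcap_{i=1}^k H_i$; since the right-hand side is a polyhedron, this gives the claim. We may assume, discarding redundant halfspaces, that each $\bd(H_i)$ is a facet hyperplane of $T$. Two easy observations come first. (a) Since $T\subseteq P$ and $K=B\cap P\subseteq T$, we have $B\cap T=K=B\cap P$; in particular any point of $B$ lying strictly outside some $H_i$ also lies strictly outside $P$. (b) The set $\bigcap_{i=1}^k H_i$ is $S$-free: if some $y\in S$ lay in its interior then, since $y\in\conv(S)\subseteq\intr(P)$, we would have $y\in\intr(P)\cap\bigcap_{i=1}^k\intr(H_i)\subseteq\intr(P\cap\bigcap_{i=1}^k H_i)=\intr(T)$, contradicting that $T$ is lattice-free. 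Because $B$ is a maximal $S$-free set, (b) reduces everything to proving the inclusion $B\subseteq\bigcap_{i=1}^k H_i$.

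First I would suppose, for contradiction, that some $x\in B$ satisfies $x\in\intr(\bar H_j)$; by (a), $x\notin P$. Since $\bd(H_j)$ is a facet hyperplane of $T$, Theorem~\ref{thm:lattice-free} gives an integral point $\bar y$ in the relative interior of the facet $F_j=T\cap\bd(H_j)$; moreover, as $\bd(H_j)$ is not a facet hyperplane of $P$, the relative interior of $F_j$ avoids $\bd(P)$, so $\bar y\in\intr(P)$. Now I would run a visibility argument around $\bar y$: the segments joining $x$ to the points of $\intr(K)$ --- a nonempty set contained in $\intr(H_j)$ --- cross $\bd(H_j)$ in a relatively open subset of $\bd(H_j)$ that lies in $B$, and the goal is to deduce that $B$ contains a full-dimensional neighbourhood of $\bar y$, hence that $\bar y\in\intr(B)$. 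From this, using Lemma~\ref{lemma:half-line}, one would locate a genuine point of $S$ in $\intr(B)$, contradicting $S$-freeness of $B$. (An equivalent route is to push $\bd(H_j)$ slightly outward and check, using the same data, that the enlarged convex set is still $S$-free while properly containing $B$, contradicting maximality of $B$.)

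I expect the crux to be this last step. The obstruction is that Theorem~\ref{thm:lattice-free}, applied to $T$, supplies only a point of $\Z^n$ in $\relint(F_j)$, whereas $P$ was built merely so that $\conv(S)\subseteq\intr(P)$; a priori $\bar y$ need not lie in $S$. Converting it --- or a nearby lattice point --- into a point of $S$ inside $\intr(B)$ should use that $K=B\cap P$ is lattice-free (not just $S$-free), the precise way the facet $\bd(H_j)$ separates $x$ from $\intr(K)$, possibly Lemma~\ref{lemma:lineality} to control recession directions of $B$ lying in $\rec(\conv(S))$, and Lemma~\ref{lemma:half-line} to pass from ``approximately on a half-line through a lattice point'' to ``an actual point of $S$ in $\intr(B)$''. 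The reductions in the first paragraph, by contrast, are routine.
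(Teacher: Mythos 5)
Your overall strategy---exhibit an explicit $S$-free polyhedron containing $B$ and invoke maximality---is the right one, and your observations (a) and (b) are correct; (b) is essentially the paper's own argument that its candidate polyhedron is $S$-free. But the candidate $\bigcap_{i=1}^k H_i$ is the wrong one, and the inclusion $B\subseteq\bigcap_{i=1}^k H_i$ is precisely where the argument breaks, for a reason that kills the visibility argument outright. If $x\in B$ with $x\notin H_j$ and $z\in\intr(K)$, then $[x,z]\cap P$ is a subsegment $[p,z]$ contained in $B\cap P=K\subseteq T\subseteq H_j$; hence $[x,z]$ crosses $\bd(H_j)$ at a point of $[x,p]$, i.e.\ outside $\intr(P)$. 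So the ``shadow'' of $\intr(K)$ from $x$ on $\bd(H_j)$ is entirely disjoint from $\intr(P)$, whereas $\bar y\in\relint(F_j)\subseteq\intr(P)$: the point $\bar y$ can never lie in that shadow. More generally $\conv(\{x\}\cup K)\cap P=K$, so adjoining $x$ to $B$ creates no new points inside $P$, and no integral point of $\intr(P)$ can be forced into $\intr(B)$ this way. (Your worry that $\bar y$ might not belong to $S$ is a red herring: had you obtained $\bar y\in\intr(B)$, then $\bar y\in\intr(B)\cap\intr(P)=\intr(K)$ would already contradict lattice-freeness of $K$. The genuine obstruction is getting $\bar y$ into $\intr(B)$ at all.) Note also that $S$-freeness of $B$ alone can never produce the contradiction you seek, since $\intr(B)\cap\conv(S)\subseteq\intr(K)$ contains no point of $S$ however far $B$ pokes out of $H_j$; any proof must go through maximality, i.e.\ must produce an $S$-free convex set \emph{provably containing} $B$.

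That is what the paper does, by a construction that sidesteps $B\subseteq H_i$ entirely. It first shows $\intr(B)\cap\bar H_i\cap\conv(S)=\emptyset$ (a point of $\intr(B)\setminus\intr(H_i)$ lies outside $\intr(K)$, hence outside $\intr(P)$, hence outside $\conv(S)$), then separates $B$ from the convex set $\bar H_i\cap\conv(S)$ by a halfspace $K_i\supseteq B$, and finally runs your argument (b) with $K_i$ in place of $H_i$: every $y\in S$ lies in some $\bar H_i\cap\conv(S)$ and hence outside $\intr(K_i)$, so $\bigcap_i K_i$ is $S$-free, and maximality gives $B=\bigcap_i K_i$. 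The inclusion $B\subseteq\bigcap_i H_i$ you are after is in fact true in the end, but only because every facet of $B$ contains a point of $S$ in its relative interior, which forces $T=K$; that is Claim~\ref{claim:facets}, proved after---and using---polyhedrality, so taking it as the route to polyhedrality would be circular.
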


We first show that, for $i=1,\ldots,k$, $\intr(B)\cap(\bar H_i\cap \conv(S))=\emptyset$. Consider $y\in\intr(B)\cap\bar H_i$. Since $y\in\bar H_i$ and $K$ is contained in $T$, $y\notin\intr(K)$. Since $K=B\cap P$ and $y\in \intr(B)\sm\intr(K)$, it follows that $y\notin \intr(P)$. Hence $y\notin \conv(S)$ because $\conv(S)\subseteq\intr(P)$.

Thus, for $i=1,\ldots,k$, there exists a hyperplane separating $B$ and $\bar H_i\cap \conv(S)$. Hence there exists a halfspace $K_i$ such that $B\subset K_i$ and $\bar H_i\cap \conv(S)$ is disjoint from the interior of $K_i$. We claim that the set $B'=\cap_{i=1}^k K_i$ is $S$-free. Indeed, let $y\in S$. Then $y$ is not interior of $T$. Since $y\in\conv(S)$ and $\conv(S)\subseteq\intr(P)$, $y$ is in the interior of $P$. Hence, by \eqref{eq:T}, there exists $i\in\{1,\ldots,k\}$ such that $y$ is not in the interior of $H_i$. Thus $y\in \bar H_i\cap \conv(S)$. By construction, $y$ is not in the interior of $K_i$, hence $y$ is not in the interior of $B'$. Thus $B'$ is an $S$-free convex set containing $B$. Since $B$ is maximal, $B'=B$.\hfill $\diamond$

\begin{claim}\label{claim:lin-K} $\lin(K)=\rec(K)$.
\end{claim}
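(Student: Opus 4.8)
The plan is to use the maximal lattice-free polyhedron $T\supseteq K$ produced in~\eqref{eq:T}, whose recession cone coincides with its lineality space by Lov\'asz' theorem. Since the inclusion $\lin(K)\subseteq\rec(K)$ is automatic, only $\rec(K)\subseteq\lin(K)$ needs proof. Observe first that $K$ is full-dimensional: we are in the case where $B\cap\conv(S)$ has nonempty interior, and $\conv(S)\subseteq\intr(P)$ gives $B\cap\conv(S)\subseteq B\cap P=K$. Hence $T$ is full-dimensional too, so by Theorem~\ref{thm:lattice-free} it is a polyhedron of type $(i)$ and $L:=\rec(T)=\lin(T)$ is a rational linear space with $T+L=T$.

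Two consequences are immediate. First, $K\subseteq T$ and $T$ closed give $\rec(K)\subseteq\rec(T)=L$. Second, $T\subseteq P$ gives $L=\rec(T)\subseteq\rec(P)$, and since $L$ is a linear space this yields $L\subseteq\lin(P)$, so $P+\langle r\rangle=P$ for all $r\in L$. Now fix $r\in\rec(K)\subseteq L$. Then $K+\langle r\rangle\subseteq T+\langle r\rangle=T$, so $K+\langle r\rangle$ is lattice-free. I would then verify the identity $(B+\langle r\rangle)\cap P=K+\langle r\rangle$: ``$\supseteq$'' follows from $K\subseteq B$, $K\subseteq P$, and $P+\langle r\rangle=P$; for ``$\subseteq$'', if $z=b+\mu r\in P$ with $b\in B$ then $b=z-\mu r\in P$ because $-\mu r\in\lin(P)$, so $b\in B\cap P=K$. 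Consequently $B+\langle r\rangle$ is $S$-free: were some $y\in S$ in $\intr(B+\langle r\rangle)$, then, using $y\in\conv(S)\subseteq\intr(P)$, we would get $y\in\intr(B+\langle r\rangle)\cap\intr(P)\subseteq\intr\bigl((B+\langle r\rangle)\cap P\bigr)=\intr(K+\langle r\rangle)$, contradicting lattice-freeness of $K+\langle r\rangle$ since $y\in\Z^n$. By maximality of $B$ we conclude $B+\langle r\rangle=B$; combined with $P+\langle r\rangle=P$, every $x\in K$ has $x+tr\in B\cap P=K$ for all $t\in\R$, i.e.\ $r\in\lin(K)$. This proves $\rec(K)\subseteq\lin(K)$.

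The delicate points are the identity $(B+\langle r\rangle)\cap P=K+\langle r\rangle$, the elementary fact $\intr(X)\cap\intr(Y)\subseteq\intr(X\cap Y)$ used to trap $y$, and the correct application of Lov\'asz' theorem to the full-dimensional $T$ (so that $\rec(T)=\lin(T)$ is a rational linear space). Everything else is routine manipulation of recession cones and lineality spaces.
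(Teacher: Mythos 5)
Your proof is correct, and its core step coincides with the paper's: take $r\in\rec(K)$, show $K+\langle r\rangle$ is lattice-free and hence $B+\langle r\rangle$ is $S$-free, and invoke maximality of $B$ to get $B+\langle r\rangle=B$. Where you diverge is in how the $P$-part of the recession cone is handled. The paper shows $-r\in\rec(P)$ directly: if some facet $F$ of $P$ were not parallel to $r$, the integral point guaranteed in the relative interior of the facet $F\cap K$ (property $iv)$ of Claim~\ref{claim:P}) would land in $\intr(K+\langle r\rangle)$, contradicting lattice-freeness. You instead route through the maximal lattice-free set $T$ of~\eqref{eq:T}: $\rec(K)\subseteq\rec(T)=\lin(T)\subseteq\lin(P)$ by Theorem~\ref{thm:lattice-free} and $T\subseteq P$. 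This buys you two things: you get $K+\langle r\rangle\subseteq T$ lattice-free for free, without invoking Lemma~\ref{lemma:lineality}, and knowing $r\in\lin(P)$ up front lets you prove the clean identity $(B+\langle r\rangle)\cap P=K+\langle r\rangle$, which makes the step from $y\in\intr(B+\langle r\rangle)\cap\intr(P)$ to $y\in\intr(K+\langle r\rangle)$ fully rigorous --- a point the paper passes over rather tersely (``$y\in\intr(P+\langle r\rangle)$, hence $y\in\intr(K+\langle r\rangle)$''). The trade-off is that your argument leans on the fact that every facet of $P$ induces a facet of $T$ (so that $T\subseteq P$), which the paper established earlier from the same property $iv)$; so the integral-points-in-facets ingredient is still present in your proof, just consumed indirectly through $T$ rather than applied to $P$ and $K$ directly.
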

Let $r\in \rec(K)$. We show $-r\in\rec(K)$. By Lemma~\ref{lemma:lineality} applied to $\Z^n$, $K+\langle r\rangle$ is lattice-free. We observe that $B+\langle r\rangle$ is $S$-free. If not, let $y\in S\cap\intr(B+\langle r\rangle)$. Since $S\subseteq \intr(P)$, $y\in\intr(P+\langle r\rangle)$, hence $y\in \intr(K+\langle r\rangle)$, a contradiction. Hence, by maximality of $B$, $B=B+\langle r\rangle$. Thus $-r\in\rec(B)$. Suppose that $-r\notin\rec(P)$. Then there exists a facet $F$ of $P$ that is not parallel to $r$. By construction, $F\cap K$ is a facet of $K$ containing an integral point $\bar x$ in its relative interior. The point $\bar x$ is then in the interior of $K+\langle r\rangle$, a contradiction.\hfill $\diamond$

\begin{claim}\label{claim:lin-K-rat} $\lin(K)$ is rational.
\end{claim}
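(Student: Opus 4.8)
The plan is to identify $\lin(K)$ with the lineality space of the maximal lattice-free polyhedron $T\supseteq K$ constructed above, and then to quote Lov\'asz' theorem to conclude that this space is rational. First note that, since $B\cap\conv(S)$ has nonempty interior and $\conv(S)\subseteq P$, the set $K=B\cap P$ has nonempty interior, so $\dim(T)=\dim(K)=n$; hence $T$ is of type $(i)$ in Theorem~\ref{thm:lattice-free}, i.e. $T=Q+L'$ with $Q$ a polytope and $L'$ a rational linear subspace. As $Q$ is bounded, $\rec(T)=L'=\lin(T)$, so $\lin(T)$ is rational. Moreover $T\subseteq P$ gives $\rec(T)\subseteq\rec(P)$, and since $L'=\rec(T)$ is a linear subspace contained in $\rec(P)$ we get $L'\subseteq\lin(P)$.

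One inclusion is immediate: by Claim~\ref{claim:lin-K}, $\lin(K)=\rec(K)$, and $K\subseteq T$ yields $\rec(K)\subseteq\rec(T)=L'$. For the reverse inclusion it suffices, since $L'\subseteq\lin(P)$ and $\lin(K)=\lin(B)\cap\lin(P)$, to show $L'\subseteq\lin(B)$; this is where maximality of $B$ enters. Fix $l\in L'$; I claim $B+\langle l\rangle$ is $S$-free. If not, choose $y\in S\cap\intr(B+\langle l\rangle)$. Since $l\in\lin(P)$, the polyhedron $P$ is invariant under translation by $\langle l\rangle$, and a direct check gives the identity $K+\langle l\rangle=(B\cap P)+\langle l\rangle=(B+\langle l\rangle)\cap P$. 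As $y\in S\subseteq\conv(S)\subseteq\intr(P)$ and $y\in\intr(B+\langle l\rangle)$, we obtain $y\in\intr(B+\langle l\rangle)\cap\intr(P)\subseteq\intr\bigl((B+\langle l\rangle)\cap P\bigr)=\intr(K+\langle l\rangle)$. But $K\subseteq T$ and $l\in\lin(T)$ give $K+\langle l\rangle\subseteq T+\langle l\rangle=T$, so $y$ is an integral point in the interior of the lattice-free set $T$, a contradiction. Hence $B+\langle l\rangle$ is $S$-free, so $B+\langle l\rangle=B$ by maximality, i.e. $l\in\lin(B)$. Therefore $L'\subseteq\lin(B)\cap\lin(P)=\lin(K)$, and together with the first inclusion this gives $\lin(K)=L'$, which is rational.

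The only slightly technical ingredients are the translation identity $(B\cap P)+\langle l\rangle=(B+\langle l\rangle)\cap P$ (which holds precisely because $l\in\lin(P)$, so that $P$, and hence the intersection, behaves well under $\langle l\rangle$) together with the routine facts that, for closed convex sets $A,B$ with $A\cap B\neq\emptyset$, $\lin(A\cap B)=\lin(A)\cap\lin(B)$ and $\intr(A)\cap\intr(B)\subseteq\intr(A\cap B)$. The genuine point of the argument --- and the step I expect to be the real obstacle to locate --- is the decision to compare $K$ with $T$ rather than with $\conv(S)$ directly, and then to push the rational lineality space of $T$ into $B$ using maximality; once that is set up, everything else is bookkeeping.
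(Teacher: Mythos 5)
Your proof is correct and takes essentially the same route as the paper's: both identify $\lin(K)$ with the rational space $\lin(T)$ and use the maximality of $B$ together with the observation that an integral point in $\intr(B+\langle l\rangle)$ would be forced into $\intr(T)$ (via $\conv(S)\subseteq\intr(P)$, $l\in\lin(P)$ and $l\in\lin(T)$). The only cosmetic difference is that you argue directly for every $l\in\lin(T)$, whereas the paper argues by contradiction with a rational $r\in\lin(T)\setminus\lin(K)$.
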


Consider the maximal lattice-free convex set $T$ containing $K$ considered earlier. By Theorem~\ref{thm:lattice-free}, $\lin(T)=\rec(T)$, and $\lin(T)$ is rational. Clearly $\lin(T)\supseteq \lin(K)$. Hence, if the claim does not hold, there exists a rational vector $r\in\lin(T)\sm \lin(K)$. By~\eqref{eq:T}, $r\in \lin(P)$. Since $K=B\cap P$, $r\notin\lin(B)$. Hence $B\subset B+\langle r\rangle$. We will show that $B+\langle r\rangle$ is $S$-free, contradicting the maximality of $B$. Suppose there exists $y\in S\cap\intr(B+\langle r\rangle)$. Since $\conv(S)\subseteq\intr(P)$, $y\in\intr(P)\subseteq \intr(P)+\langle r\rangle$. Therefore $y\in\intr(B\cap P)+\langle r\rangle$. Since $B\cap P\subseteq T$, then $y\in\intr(T)+\langle r\rangle=\intr(T)$ where the last equality follows from $r\in \lin(T)$. This contradicts the fact that $T$ is lattice-free. \hfill $\diamond$
\bigskip

By Lemma~\ref{lemma:lineality} and by the maximality of $B$, $\lin(B)\cap\rec(\conv(S))=\rec(B)\cap\rec(\conv(S))$.

\begin{claim} \label{claim:rational-cone} $\lin(B)\cap\rec(\conv(S))$ is rational.
\end{claim}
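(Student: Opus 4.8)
The plan is to derive the claim directly from the two structural facts about $K=B\cap P$ already in hand, namely $\rec(K)=\lin(K)$ (Claim~\ref{claim:lin-K}) and $\lin(K)$ rational (Claim~\ref{claim:lin-K-rat}), together with the trivial inclusion $\conv(S)\subseteq P$ coming from Claim~\ref{claim:P}$(i)$. Intuitively, $\rec(\conv(S))$ is a rational polyhedral cone, and the identity $\lin(B)\cap\rec(\conv(S))=\rec(B)\cap\rec(\conv(S))$ recorded just above the claim forces any such direction to lie in $\rec(P)$ as well, hence in $\rec(B)\cap\rec(P)=\rec(K)=\lin(K)$.

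Concretely, I would argue as follows. Since $\conv(S)\subseteq P$, we have $\rec(\conv(S))\subseteq\rec(P)$. Let $r\in\lin(B)\cap\rec(\conv(S))$. Then $r\in\rec(B)$ (because $\lin(B)\subseteq\rec(B)$) and $r\in\rec(P)$, so $r\in\rec(B)\cap\rec(P)=\rec(B\cap P)=\rec(K)$; here I use that $B$ is a polyhedron (Claim~\ref{claim:B-poly}), that $K=B\cap P$ is nonempty (indeed $B\cap\conv(S)$ has nonempty interior and $\conv(S)\subseteq P$), and the standard fact that the recession cone of a nonempty intersection of polyhedra is the intersection of their recession cones. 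By Claim~\ref{claim:lin-K}, $\rec(K)=\lin(K)$, so $r\in\lin(K)$. This gives $\lin(B)\cap\rec(\conv(S))\subseteq\lin(K)$.

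For the reverse inclusion, $\lin(K)=\lin(B)\cap\lin(P)\subseteq\lin(B)$, so $\lin(K)\cap\rec(\conv(S))\subseteq\lin(B)\cap\rec(\conv(S))$. Hence $\lin(B)\cap\rec(\conv(S))=\lin(K)\cap\rec(\conv(S))$. Now $\lin(K)$ is rational by Claim~\ref{claim:lin-K-rat}, and $\rec(\conv(S))$ is rational because $\conv(S)$ is a rational polyhedron; the intersection of two rational polyhedral cones is again a rational polyhedral cone, so $\lin(B)\cap\rec(\conv(S))$ is rational, as desired.

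I do not expect a genuine obstacle at this point: essentially all the work is in the earlier claims (polyhedrality of $B$ and the two structural properties of $K$), and what remains is bookkeeping with recession and lineality cones. The only step requiring a moment of care is the appeal to $\rec(B\cap P)=\rec(B)\cap\rec(P)$ and $\lin(B\cap P)=\lin(B)\cap\lin(P)$, which are valid precisely because $B$ is already known to be a polyhedron and $B\cap P$ is nonempty.
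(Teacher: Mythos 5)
Your argument is correct and is essentially the paper's own proof: both reduce the claim to the identity $\lin(B)\cap\rec(\conv(S))=\lin(K)\cap\rec(\conv(S))$ via $\conv(S)\subseteq P$ and Claims~\ref{claim:lin-K} and~\ref{claim:lin-K-rat}, then invoke rationality of $\lin(K)$ and $\rec(\conv(S))$. Your write-up is if anything slightly more explicit than the paper about where $\rec(K)=\lin(K)$ and the polyhedrality of $B$ enter.
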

Since $\lin(K)$ and $\rec(\conv(S))$ are both rational, we only need
to show $\lin(B)\cap\rec(\conv(S))=\lin(K)\cap\rec(\conv(S))$. The
``$\supseteq$'' direction follows from $B\supseteq K$. For the other
direction, note that, since $\conv(S)\subseteq P$, we have
$\lin(B)\cap \rec(\conv(S))\subseteq \lin(B)\cap \rec(P)=\lin(B\cap
P)=\lin(K)$, hence $\lin(B)\cap\rec(\conv(S))\subseteq
\lin(K)\cap\rec(\conv(S))$.\hfill $\diamond$

\begin{claim} \label{claim:facets} Every facet of $B$ contains a point of $S$ in its relative interior.
\end{claim}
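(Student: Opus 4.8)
The plan is to argue by contradiction. Suppose some facet $F$ of $B$ has $\relint(F)\cap S=\emptyset$. Since we are in the case $\dim(B)=n$, write $B=\{x\in\R^n:\ a_ix\le b_i,\ i=1,\dots,\ell\}$ with an irredundant system, indexed so that $F=B\cap\{x:\ a_1x=b_1\}$; then $\relint(F)=\{x:\ a_1x=b_1,\ a_ix<b_i\ \forall i\ge 2\}$. For each $\varepsilon>0$ set $B_\varepsilon=\{x:\ a_1x\le b_1+\varepsilon\}\cap\bigcap_{i=2}^{\ell}\{x:\ a_ix\le b_i\}$. Because $a_1x\le b_1$ defines a facet of $B$, $B_\varepsilon$ is a convex set strictly containing $B$, so by maximality it is not $S$-free; pick $y_\varepsilon\in S\cap\intr(B_\varepsilon)$. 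Since $B$ is $S$-free we cannot have $a_1y_\varepsilon<b_1$; since $\relint(F)\cap S=\emptyset$ and $a_iy_\varepsilon<b_i$ for $i\ge 2$, we cannot have $a_1y_\varepsilon=b_1$ either. Hence $b_1<a_1y_\varepsilon<b_1+\varepsilon$ for every $\varepsilon>0$.

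Next I would trap all these points in one polyhedron whose recession cone is rational and annihilates $a_1$. Let $Q=\conv(S)\cap B_1$ (nonempty, since it contains $B\cap\conv(S)$). This is a polyhedron and $\rec(Q)=\rec(\conv(S))\cap\rec(B_1)=\rec(\conv(S))\cap\rec(B)$, which by the remark preceding Claim~\ref{claim:rational-cone} equals $L:=\lin(B)\cap\rec(\conv(S))$; by Claim~\ref{claim:rational-cone}, $L$ is rational, and $L\subseteq\lin(B)$, so $a_1d=0$ for all $d\in L$. Taking $\varepsilon=1/j$ for $j\in\N$, each $y_j:=y_{1/j}$ lies in $S\subseteq\conv(S)$ and in $B_{1/j}\subseteq B_1$, hence $y_j\in Q$ for all $j$.

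Finally I would run a lattice-projection argument. Let $V=L^\perp$; since $L$ is rational, the orthogonal projection $\Lambda$ of $\Z^n$ onto $V$ is a lattice of $V$ (the same fact used in the $\dim(B)<n$ case, cf.\ \cite{barv}), and $\proj_V(Q)$ is a \emph{bounded} polyhedron because $\rec(\proj_V(Q))=\proj_V(\rec(Q))=\proj_V(L)=\{0\}$. Therefore $\{\proj_V(y_j):j\in\N\}$ is contained in the finite set $\Lambda\cap\proj_V(Q)$, so there is an infinite $J\subseteq\N$ on which $\proj_V(y_j)$ is constant. For $j,j'\in J$, $y_j-y_{j'}\in\ker(\proj_V)\cap\Z^n=L\cap\Z^n\subseteq\lin(B)$, whence $a_1y_j=a_1y_{j'}=:\beta$, and $\beta>b_1$. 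But $\beta=a_1y_j<b_1+1/j$ for every $j\in J$ and $\inf\{1/j:j\in J\}=0$, so $\beta\le b_1$ — a contradiction, completing the proof.

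The step I expect to be the crux is this last one: ruling out the scenario in which the values $a_1y_\varepsilon$ tend down to $b_1$ without ever being attained at a point of $S$ lying on the facet hyperplane. Everything before it is a routine ``push the facet out and invoke maximality'' argument; the rationality of $L$ provided by Claim~\ref{claim:rational-cone} (together with the fact that projecting $\Z^n$ along a rational subspace yields a lattice) is exactly what forces an infinite subfamily of the $y_j$ to share a common $a_1$-value, producing the contradiction.
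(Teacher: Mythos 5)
Your proof is correct, and it rests on the same two pillars as the paper's own argument: relaxing a facet-defining inequality by $\varepsilon$ and invoking maximality of $B$ to force a point of $S$ into the interior of the enlarged set, and then exploiting the rationality of the space spanned by $\lin(B)\cap\rec(\conv(S))$ (Claim~\ref{claim:rational-cone}) together with boundedness after projecting along it to obtain a finiteness statement. The packaging differs in a genuine way. The paper projects the whole configuration onto $L^\bot$ at the outset, verifies that the projection $B'$ of $B$ is again a \emph{maximal} $S'$-free convex set, fixes a single $\varepsilon$, notes that $\intr(\conv(S')\cap\bar B)$ contains only finitely many points of $S'$, picks one minimizing $\alpha_t x$, and applies maximality a second time to conclude that the facet already passes through that point; this is direct and actually exhibits the point of $S$ in the relative interior of the facet. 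You instead argue by contradiction with the sequence $\varepsilon=1/j$, trap all the witnesses $y_j$ inside the fixed polyhedron $\conv(S)\cap B_1$, and use the pigeonhole principle on the lattice projection to show that the values $a_1y_j$ cannot strictly decrease to $b_1$. Your route has the small advantage of never needing to check that the projected set is maximal $S'$-free (you only project for the finiteness step and stay in $\R^n$ otherwise), at the price of being non-constructive. One cosmetic repair: $\lin(B)\cap\rec(\conv(S))$ is a cone, not a linear subspace, so you should define $L$ as its linear span $\langle\lin(B)\cap\rec(\conv(S))\rangle$ (which is rational by Claim~\ref{claim:rational-cone} and still contained in $\lin(B)$, so $a_1d=0$ on $L$) before forming $V=L^\bot$ and writing $\ker(\proj_V)=L$; with that adjustment every step goes through.
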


Let $L$ be the linear space generated by
$\lin(B)\cap\rec(\conv(S))$. By Claim~\ref{claim:rational-cone}, $L$
is rational. Let $B'$, $S'$, $\Lambda$ be the projections of $B$,
$S$, $\Z^n$, respectively, onto $L^\bot$. Since $L$ is rational,
$\Lambda$ is a lattice of $L^\bot$ and $S'=\conv(S')\cap\Lambda$.
Also, $B'$ is a maximal $S'$-free convex set of $L^\bot$, since for
any $S'$-free set $D$ of $L^\bot$, $D+L$ is $S$-free. Note that
$\lin(B)\cap\rec(\conv(S))=\rec(B)\cap\rec(\conv(S))$ implies that
$B'\cap\conv(S')$ is bounded. Otherwise there is an unbounded
direction $d \in L^\bot$ in $\rec(B')\cap\rec(\conv(S'))$ and so $d
+ l \in \rec(B)\cap\rec(\conv(S))$ for some $l\in L$. Since
$\rec(B)\cap\rec(\conv(S)) = \lin(B)\cap\rec(\conv(S))$, this would
imply that $d \in L$ which is a contradiction. Let $B'=\{x\in
L^\bot\st \alpha_i x\leq \beta_i,\,i=1,\ldots,t\}$. Given
$\varepsilon>0$, let $\bar B=\{x\in L^\bot\st \alpha_i x\leq
\beta_i,\,i=1,\ldots,t-1,\, \alpha_t x\leq \beta_t+\varepsilon\}$.
The polyhedron $\conv(S')\cap \bar B$ is a polytope since it has the
same recession cone as $\conv(S')\cap B'$. The polytope
$\conv(S')\cap \bar B$ contains points of $S'$ in its interior by
the maximality of $B'$. Since $\Lambda$ is a lattice of $L^\bot$,
$\intr(\conv(S')\cap \bar B)$ has a finite number of points in $S'$,
hence there exists one minimizing $\alpha_t x$, say $z$. By
construction, the polyhedron $B''=\{x\in L^\bot\st \alpha_i x\leq
\beta_i,\,i=1,\ldots,t-1,\, \alpha_t x\leq \alpha_t z\}$ does not
contain any point of $S'$ in its interior and contains $B'$. By the
maximality of $B'$, $B'=B''$ hence $B'$ contains $z$ in its relative
interior, and $B$ contains a point of $S$ in its relative interior.
\end{proof}

\begin{corollary} For every maximal $S$-free convex set $B$ there exists a maximal lattice-free convex set $K$ such that, for every facet $F$ of $B$, $F\cap K$ is a facet of $K$.
\end{corollary}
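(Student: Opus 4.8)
Here is the approach I would take.

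The plan is to find, for a given maximal $S$-free convex set $B$, a maximal lattice-free convex set $K$ with $K\subseteq B$ and such that every facet hyperplane of $B$ is also a facet hyperplane of $K$; this suffices, since then for a facet $F=B\cap H$ of $B$ (with $H$ its defining hyperplane) we have $F\cap K=H\cap K$ because $K\subseteq B$, and $H\cap K$ is a facet of $K$. By Theorem~\ref{thm:S-free} there are three cases. If $B$ is a hyperplane (case $(iii)$) it has no facets and the statement is vacuous, so take $K$ to be any maximal lattice-free convex set. Otherwise $B$ is an $n$-dimensional polyhedron, so $B=\bigcap_{i=1}^{p}H_i^{\le}$, where $H_i=\{x\st a_ix=\beta_i\}$ is the hyperplane of the facet $F_i=B\cap H_i$ and $H_i^{\le}=\{x\st a_ix\le\beta_i\}$. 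I would first observe that each $F_i$ contains a point $s_i\in S$ in its relative interior: in case $(i)$ this is Claim~\ref{claim:facets}, and in case $(ii)$, $H_i=\bd(B)$ supports $\conv(S)$, so $H_i\cap\conv(S)$ is a nonempty face of $\conv(S)$, every nonempty face of $\conv(S)$ contains a point of $\conv(S)\cap\Z^n=S$, and $\relint(F_i)=H_i$.

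The next step is to cut $B$ down to a lattice-free polyhedron that still exposes all the $F_i$, as in the proof of Claim~\ref{claim:P}: writing $\conv(S)=\{x\st Ax\le b\}$ with $A,b$ integral, set $P=\{x\st Ax\le b+\frac{1}{2}\mathbf{1}\}$, so that $\conv(S)\subseteq\intr(P)$ and $P\cap\Z^n=S$. Then $K_0:=B\cap P$ is lattice-free, since it lies in the $S$-free set $B$ and in $P$, which has no integral point outside $S$. As $s_i\in S\subseteq\intr(P)$ and $s_i\in\relint(F_i)$, we get $s_i\in\relint(F_i)\cap\intr(P)=\relint(F_i\cap P)$, so $F_i\cap P=K_0\cap H_i$ is $(n-1)$-dimensional, hence a facet of $K_0$ with the integral point $s_i$ in its relative interior. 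It is easily checked that $\intr(B)\cap\intr(P)\neq\emptyset$, so $K_0$ is $n$-dimensional; therefore any maximal lattice-free convex set $K$ containing $K_0$ (one exists by Zorn's lemma) is $n$-dimensional as well, and by Theorem~\ref{thm:lattice-free} it is a polyhedron with an integral point in the relative interior of each facet.

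The heart of the argument is a local flatness observation. Fix $i$. As $s_i\in K_0\subseteq K$ is integral and $K$ is lattice-free, $s_i\in\bd(K)$, so $s_i$ lies on some facet of $K$ with defining inequality $a'x\le b'$, giving $a's_i=b'$. But $K$ also contains $\relint(F_i\cap P)$, a relatively open $(n-1)$-dimensional neighborhood of $s_i$ inside $H_i$; being contained in $\{x\st a'x\le b'\}$ and passing through $s_i\in\{x\st a'x=b'\}$, this neighborhood must lie in the hyperplane $\{x\st a'x=b'\}$, and since two distinct hyperplanes cannot share an $(n-1)$-dimensional set, $\{x\st a'x=b'\}=H_i$. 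Hence $H_i$ is a facet hyperplane of $K$, $K\subseteq\{x\st a'x\le b'\}=H_i^{\le}$ (the correct halfspace, as the $n$-dimensional $K_0\subseteq K$ lies in it), and $K\cap H_i$ is a facet of $K$. Since this holds for every $i$ and $B=\bigcap_{i=1}^{p}H_i^{\le}$, we obtain $K\subseteq B$, and therefore $F_i\cap K=(B\cap H_i)\cap K=H_i\cap K$ is a facet of $K$ for each facet $F_i$ of $B$, which proves the statement with this $K$. I expect the main obstacle to be precisely this flatness step — forcing the supporting hyperplane of $K$ through $s_i$ to be $H_i$; the secondary points needing care are the case-$(ii)$ verification that $\bd(B)$ meets $\conv(S)$ in a face containing a point of $S$, and the check that $K_0$ is full-dimensional so that $K$ is a polyhedron and not a hyperplane.
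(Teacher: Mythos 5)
Your proof is correct and follows essentially the same route as the paper's: it reuses the perturbed polyhedron $P$ from Claim~\ref{claim:P}, forms the lattice-free set $B\cap P$, extends it to a maximal lattice-free convex set, and forces each facet hyperplane of $B$ to be a facet hyperplane of that set via the integral point in the relative interior (Claim~\ref{claim:facets}) --- the same ``flatness'' argument the paper uses to show $T\subset P$. You simply spell out the details the paper's one-line proof leaves implicit, including the degenerate cases (ii) and (iii) of Theorem~\ref{thm:S-free}.
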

\begin{proof} Let $K$ be defined as in Claim~\ref{claim:P} in the proof of Theorem~\ref{thm:S-free}. It follows from the proof that $K$ is a maximal lattice-free convex set with the desired properties.
\end{proof}

\section{Minimal valid functions}

\label{sec:Rf}

In this section we study minimal valid functions. We find it convenient to state our results in terms of
an infinite model introduced by Dey and Wolsey~\cite{DW}.

Throughout this section, $S\subseteq \Z^n$ is a set of integral points in some rational
polyhedron of $\R^n$ such that $\dim(S)=n$, and  $f$ is a point in $\conv(S)\setminus \Z^n$.
Let $R_{f,S}$ be the set of all infinite dimensional vectors $s=(s_r)_{r\in\R^n}$ such that
\begin{eqnarray}\label{eq:RfS}
&&f+\sum_{r\in \R^n} r s_r\in S\nonumber\\
&&s_r\geq 0,\quad r\in \R^n\\
&&s \mbox{ has finite support}\nonumber
\end{eqnarray}
where $s$ has {\em finite support} means that $s_r$ is zero for all but a finite number of $r\in\R^n$.

A function $\psi\,:\,\R^n\rightarrow \R$ is {\em valid} (with respect to $f$ and $S$) if the linear
inequality
\begin{equation}\label{eq:valid}\sum_{r\in \R^n}\psi(r)s_r\geq 1\end{equation}
is satisfied by every $s\in R_{f,S}$.
Note that this definition coincides with the one we gave in the introduction.

Given two functions $\psi, \psi' $ we say that $\psi'$ {\em dominates} $\psi$ if $\psi'(r)\leq\psi(r)$ for all $r\in\R^n$.
A valid function $\psi$ is {\em minimal} if there is no valid function $\psi'\neq\psi$ that dominates $\psi$.

\begin{theorem}\label{thm:min-ineq-intr}
For every minimal valid function $\psi$, there exists a maximal $S$-free convex set $B$ with $f$ in its interior such that  $\psi_B$ dominates $\psi$.
Furthermore, if $B$ is a maximal
$S$-free convex set containing
$f$ in its interior, then $\psi_B$ is a minimal valid function.
\end{theorem}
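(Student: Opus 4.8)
The plan is to prove the two assertions separately, treating the converse as a warm-up and the forward implication as the substance.

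\smallskip
\noindent\emph{The converse.} Let $B$ be a maximal $S$-free convex set with $f\in\intr(B)$. By Theorem~\ref{thm:S-free}, $B$ is a polyhedron, and since $f\in\intr(B)$ we may write $B=\{x:a_i(x-f)\le 1,\ i=1,\dots,k\}$ irredundantly; each facet $F_i$ contains a point $z_i\in S$ in its relative interior, so $a_i(z_i-f)=1$ while $a_j(z_i-f)<1$ for $j\ne i$. Then $\psi_B(r)=\max_i a_i r$ is sublinear and $B_{\psi_B}=B$ is $S$-free, so $\psi_B$ is valid by the argument in the introduction. For minimality, let $\psi'$ be valid with $\psi'\le\psi_B$ and fix $r$; put $\gamma=\psi_B(r)=a_{i^*}r$ for a maximizing index $i^*$. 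For $\eta>0$ the vector $s$ with $s_r=\eta$, $s_{(z_{i^*}-f)-\eta r}=1$ and all other entries $0$ lies in $R_{f,S}$, because $f+\eta r+\big((z_{i^*}-f)-\eta r\big)=z_{i^*}\in S$; validity of $\psi'$ gives $\eta\psi'(r)+\psi'\big((z_{i^*}-f)-\eta r\big)\ge 1$. Since $a_j(z_{i^*}-f)<1$ for $j\ne i^*$, for all sufficiently small $\eta$ we have $\psi_B\big((z_{i^*}-f)-\eta r\big)=a_{i^*}\big((z_{i^*}-f)-\eta r\big)=1-\eta\gamma$, hence $\psi'\big((z_{i^*}-f)-\eta r\big)\le 1-\eta\gamma$ and therefore $\psi'(r)\ge\gamma=\psi_B(r)$. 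Thus $\psi'=\psi_B$, so $\psi_B$ is minimal.

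\smallskip
\noindent\emph{The forward implication.} Let $\psi$ be minimal valid. By Dey and Wolsey~\cite{DW}, $\psi$ is sublinear, $B_\psi$ is $S$-free, and $f\in\intr(B_\psi)$. The key step is to show that $B_\psi$ is in fact \emph{maximal} $S$-free. Suppose not, so $B_\psi\subsetneq B'$ for some closed $S$-free convex set $B'$. I would choose a boundary point $w$ of $B_\psi$ lying in $\intr(B')$ (such $w$ exists, since otherwise $\bd(B_\psi)\subseteq\bd(B')$, which is impossible for convex sets with $B_\psi\subsetneq B'$) with the extra property that $\psi(-(w-f))>-1$. This is possible because the boundary points $w$ with $w-f$ in the linear subspace $\{y:\psi(y)+\psi(-y)=0\}$ form a set of dimension at most $n-2$, too small to cover the relatively open piece $\bd(B_\psi)\cap\intr(B')$ of $\bd(B_\psi)$; here one uses that $\psi$ is not linear — a linear valid function $\langle a,\cdot\rangle$ would satisfy $\langle a,z-f\rangle\ge 1$ for all $z\in S$, hence $0=\langle a,f-f\rangle\ge 1$ upon writing $f$ as a convex combination of points of $S$. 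Now take $\bar x=w+\varepsilon v$, with $v$ an outward normal to $B_\psi$ at $w$ and $\varepsilon>0$ small, so that $\bar x\in B'\setminus B_\psi$ and still $\psi(f-\bar x)>-1$; the latter ensures that $W':=\partial\psi(0)\cap\{a:a(\bar x-f)\le 1\}$ is a nonempty compact convex set. Let $\psi'$ be the support function of $W'$. A polar computation gives $B_{\psi'}=\cl\,\conv\big(B_\psi\cup\{\bar x\}\big)\subseteq B'$, so $B_{\psi'}$ is $S$-free and hence $\psi'$ is valid; but $\psi'\le\psi$ and $\psi'(\bar x-f)\le 1<\psi(\bar x-f)$, so $\psi'\ne\psi$, contradicting minimality. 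Hence $B:=B_\psi$ is maximal $S$-free with $f$ in its interior, and by Theorem~\ref{thm:S-free} it is a polyhedron, written irredundantly as $B=\{x:a_i(x-f)\le 1,\ i=1,\dots,k\}$.

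\smallskip
\noindent It remains to check that $\psi_B$ dominates $\psi$. Taking polars of $B-f=\{y:a_iy\le 1\}=\big(\partial\psi(0)\big)^{\circ}$ (using $B=B_\psi$) yields $\conv(\{a_1,\dots,a_k\}\cup\{0\})=(B-f)^{\circ}=\conv\big(\partial\psi(0)\cup\{0\}\big)$. Irredundancy means $a_i\notin\conv(\{a_j:j\ne i\}\cup\{0\})$, so each $a_i$ is an extreme point of this polytope and therefore lies in $\partial\psi(0)\cup\{0\}$; as $a_i\ne 0$, we get $a_i\in\partial\psi(0)$. Hence $\conv\{a_1,\dots,a_k\}\subseteq\partial\psi(0)$ and $\psi_B(r)=\max_i a_i r\le\sup_{a\in\partial\psi(0)}a r=\psi(r)$ for all $r$, which is the claim. (In particular, $\psi_B$ is valid and $\psi$ is minimal, so actually $\psi=\psi_B$.)

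\smallskip
\noindent The main obstacle is the maximality step for $B_\psi$. The naive idea of enlarging $B_\psi$ towards $\bar x$ and using its gauge fails, because the gauge is nonnegative and hence need not dominate $\psi$ in directions where $\psi<0$ (which do occur for ``thin'' $S$); one must instead work with $\partial\psi(0)$ and intersect it with the half-space $\{a:a(\bar x-f)\le 1\}$, and the delicate point is to choose $\bar x$ close enough to $\bd(B_\psi)$ — controlling the possible non-pointedness of $B_\psi$ via the subspace $\{y:\psi(y)+\psi(-y)=0\}$ — so that this intersection remains nonempty. Everything else, including the polar bookkeeping above and the two-ray computation in the converse, is routine.
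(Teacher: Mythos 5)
Your proof of the second assertion (minimality of $\psi_B$ for a maximal $S$-free $B$ with $f$ in its interior) is correct: the two-ray computation using a point of $S$ in the relative interior of the maximizing facet is a legitimate alternative to the paper's route through Theorem~\ref{thm:sublinear} and Remark~\ref{rmk:K-poly}, and it correctly invokes Theorem~\ref{thm:S-free}(i) for the existence of the points $z_i$.

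The forward implication, however, has a genuine gap at the step ``a polar computation gives $B_{\psi'}=\cl\,\conv(B_\psi\cup\{\bar x\})$''. Writing $v=\bar x-f$ and $H=\{a\st av\le 1\}$, you have $B_{\psi'}-f=(\partial\psi(0)\cap H)^*$, and the polar identity you want would hold if $\partial\psi(0)$ were replaced by $(B_\psi-f)^*=\cl\,\conv(\partial\psi(0)\cup\{0\})$; when $\psi$ takes negative values these two sets differ, and $(\partial\psi(0)\cap H)^*$ can be strictly larger. Concretely, take $n=2$, $\psi(r)=\max\{r_1+r_2,\,r_1-r_2\}$, so that $\partial\psi(0)=\{1\}\times[-1,1]$ and $B_\psi-f=\{y\st y_1+|y_2|\le 1\}$, and take $v=(0,2)$, which satisfies your condition $\psi(-v)=2>-1$. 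Then $W'=\{1\}\times[-1,\tfrac12]$ and $B_{\psi'}-f=\{y\st y_1-y_2\le 1,\ y_1+\tfrac12 y_2\le 1\}$, whose recession cone contains the new direction $(-1,2)$; the point $(-1,4)$ lies in $B_{\psi'}-f$ but not in $\cl\,\conv((B_\psi-f)\cup\{v\})$, whose recession cone is still $\{d\st d_1+|d_2|\le 0\}$. So cutting $\partial\psi(0)$ by a half-space does not merely adjoin the point $\bar x$ to $B_\psi$: it can enlarge the recession cone, and the resulting unbounded sliver $B_{\psi'}\setminus\cl\,\conv(B_\psi\cup\{\bar x\})$ need not be contained in $B'$ and may contain points of $S$ in its interior. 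Hence the $S$-freeness of $B_{\psi'}$, and with it the validity of $\psi'$, is not established, and the contradiction with minimality evaporates. Your condition $\psi(f-\bar x)>-1$ only guarantees $W'\ne\emptyset$; it does nothing to control $\rec(B_{\psi'})$. This recession-cone blowup is exactly the difficulty the paper's Lemma~\ref{lemma:max-S-free} is built to handle: there a facet normal is tilted only within $\conv\{d_1,\ldots,d_t\}\subseteq\cl(\hat K)$, the tilting stops at the critical value $\lambda^*$ where a new recession direction $r^*$ of $P(\lambda)\cap\conv(S)$ would appear, and Lemma~\ref{lemma:lineality} together with the choice of $T$ with largest possible lineality space is used to show that $P(\lambda^*)$ already contains a point of $S$ in its interior, so one can back off to a well-chosen $\bar\lambda>\lambda^*$. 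Your concluding bookkeeping (extreme points of $(B-f)^*$ lie in $\partial\psi(0)$, hence $\psi_B\le\psi$) is fine once $B_\psi$ is known to be a maximal $S$-free polyhedron, but it rests on the broken step.
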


We will need the following lemma.

\begin{lemma}\label{lemma:sublinear} Every valid function is dominated by a sublinear valid function.
\end{lemma}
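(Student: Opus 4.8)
The plan is to take an arbitrary valid function $\psi$ and manufacture from it a sublinear function $\psi^*$ that still dominates $\psi$ pointwise and is still valid. The natural candidate is built in two steps. First I would pass to the positively homogeneous "lower envelope" of $\psi$ along rays: define $\psi'(r) = \inf_{\lambda > 0} \frac{\psi(\lambda r)}{\lambda}$ for $r \neq 0$ and $\psi'(0)=0$. This is positively homogeneous by construction (a standard check) and satisfies $\psi' \le \psi$ (take $\lambda = 1$). The point is that validity is preserved: given $s \in R_{f,S}$, for each $r$ in the support pick $\lambda_r>0$ with $\frac{\psi(\lambda_r r)}{\lambda_r}$ close to $\psi'(r)$, and rescale, replacing the term $\psi(r) s_r$ by considering the point $r' = \lambda_r r$ with weight $s_r/\lambda_r$; this produces a new feasible $s' \in R_{f,S}$ reaching the same point of $S$, so $\sum \frac{\psi(\lambda_r r)}{\lambda_r} s_r = \sum \psi(r') s'_{r'} \ge 1$, and letting the approximations tighten gives $\sum \psi'(r) s_r \ge 1$. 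One must also verify $\psi'$ is real-valued (does not take the value $-\infty$): since $f \in \conv(S)$, there are feasible solutions, and if $\psi'(r) = -\infty$ for some $r$ one could drive the left side of a valid inequality below $1$, a contradiction — so $\psi'$ is finite-valued. (If $\psi'$ were identically $-\infty$ somewhere the validity argument already forbids it.)

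Second, I would take the subadditive/convex closure. Since $\psi'$ is finite-valued and positively homogeneous, define $\psi^*(r) = \inf\{ \sum_{i=1}^k \psi'(r^i) : k \ge 1,\ \sum_{i=1}^k r^i = r \}$, the largest sublinear function below $\psi'$. This is positively homogeneous and subadditive by the usual closure argument, and $\psi^* \le \psi'$. Again validity transfers: if $\sum_{i} r^i = r$, then in any feasible $s$ one may "split" the term $\psi'(r) s_r$ into the terms $\psi'(r^i) s_r$ by replacing $r$ (with weight $s_r$) by the $r^i$ (each with weight $s_r$) — this changes $\sum_r r s_r$ by $s_r(\sum_i r^i - r) = 0$, so feasibility is preserved and $\sum \psi'(r^i) s_r$ summed over the splitting equals the contribution we need; taking infima over all splittings and over all coordinates shows $\sum_r \psi^*(r) s_r \ge 1$ for every $s \in R_{f,S}$. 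One should check $\psi^*$ stays finite-valued: it is bounded above by $\psi'(r)$, and bounded below because validity forces a lower bound (otherwise a single ray could violate an inequality). Hence $\psi^*$ is sublinear, valid, and $\psi^* \le \psi' \le \psi$.

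The main obstacle I anticipate is the bookkeeping in the two "transfer of validity" arguments, specifically making the rescaling/splitting operations legitimate within the finite-support framework of $R_{f,S}$ and handling the infima cleanly (one cannot just substitute an infimum; one works with near-optimal finite choices and passes to the limit, being careful that only finitely many coordinates of $s$ are nonzero so the approximations can be chosen uniformly). A secondary subtlety is ruling out $\psi^* = -\infty$: the cleanest route is to observe that for any $r$, writing $r = (f' - f)$ for a suitable point $f' \in S$ reachable with a single ray shows $\psi^*(r) \ge$ some explicit finite bound, or alternatively to note that if $\psi^*$ took value $-\infty$ anywhere, subadditivity would force it to be $-\infty$ on a dense set, contradicting validity outright. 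Once finiteness is in hand, the sublinearity and domination are formal, and the lemma follows.
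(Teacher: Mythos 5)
Your proposal is correct and is essentially the paper's argument: the paper defines in one shot $\bar \psi(\bar r)=\inf\{\sum_{r}\psi(r)s_r \st \sum_{r}r s_r=\bar r,\ s\geq 0 \mbox{ with finite support}\}$ (deferring the verification to Lemma~18 of \cite{BaCoCoZa}), and this single infimum is precisely the composition of your two stages (ray-wise positive homogenization followed by subadditive closure), with the same rescaling/splitting transfers of validity. Your two-step presentation just fills in the details the paper leaves to the cited reference.
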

\begin{proof}[Sketch of proof.] Given a valid function $\psi$, define the following function $\bar \psi$.
For all $\bar r\in \R^n$, let $\bar \psi(\bar r)=\inf\{\sum_{r\in\R^n}\psi(r)s_r\st \sum_{r\in\R^n}r s_r=\bar r,\, s\geq 0 \mbox{ with finite support}\}$.  Following the
proof of Lemma~18 in~\cite{BaCoCoZa} one can show that $\bar \psi$ is a valid sublinear function that dominates $\psi$.
\end{proof}

Given a  valid sublinear function $\psi$, the set $B_{\psi}=\{x\in \R^n\st \psi(x-f)\leq 1\}$ is closed, convex, and contains $f$ in its interior. Since $\psi$ is a valid function,
$B_\psi$ is $S$-free. Indeed the interior of $B_\psi$ is
$\intr(B_{\psi})=\{x\in \R^n\,:\, \psi(x-f)< 1\}$. Its boundary is
$\bd(B_{\psi})=\{x\in \R^n\,:\, \psi(x-f)= 1\}$, and its recession
cone is $\rec(B_{\psi})=\{x\in \R^n\,:\, \psi(x-f)\leq 0\}$. \medskip

Before proving Theorem~\ref{thm:min-ineq-intr}, we need the following general theorem about sublinear functions. Let $K$ be a closed, convex set in $\R^n$ with the origin in its interior. The {\em polar} of $K$ is the set $K^*=\{y\in \R^n\st ry\leq 1 \mbox{ for all } r\in K\}$. Clearly $K^*$ is closed and convex, and since $0\in \intr(K)$, it is well known that $K^*$ is bounded. In particular, $K^*$ is a compact set. Also, since $0\in K$, $K^{**}=K$. Let\begin{equation}\label{eq:set-T} \hat K=\{y\in K^*\st \exists x\in K\mbox{ such that } xy=1\}.\end{equation} Note that $\hat K$ is contained in the relative boundary of $K^*$. Let $\rho_K:\,\R^n\rightarrow \R$ be defined by
\begin{equation}\label{eq:support} \rho_K(r)=\sup_{y\in \hat K}ry,\quad \mbox{for all } r\in \R^n.\end{equation}
It is easy to show that $\rho_K$ is sublinear.

\begin{theorem}[Basu et al.~\cite{BaCoZa}]\label{thm:sublinear}
Let $K\subset \R^n$ be a closed convex set containing the origin in its interior. Then $K=\{r\in \R^n\st \rho_K(r)\leq 1\}$. Furthermore, for every sublinear function $\sigma$ such that $K=\{r\st \sigma(r)\leq 1\}$, we have $\rho_K(r)\leq \sigma(r)$ for every $r\in \R^n$.
\end{theorem}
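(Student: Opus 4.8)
The statement has two parts: the \emph{representation} $K=\{r\in\R^n\st \rho_K(r)\le 1\}$ and the \emph{minimality} $\rho_K\le\sigma$ for every sublinear $\sigma$ with $K=\{r\st\sigma(r)\le 1\}$. Throughout I would use only that $K$ is closed and convex with $0\in\intr(K)$, together with the facts recalled just before the statement that $K^*$ is compact and $K^{**}=K$. The guiding observation is that $\rho_K$ is the support function of $\hat K$, so each part reduces to controlling the linear functionals $r\mapsto \langle r,y\rangle$ as $y$ ranges over $\hat K\subseteq K^*$.

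For the representation, the inclusion $K\subseteq\{r\st\rho_K(r)\le 1\}$ is immediate: if $r\in K$ then for every $y\in\hat K\subseteq K^*$ we have $\langle r,y\rangle\le 1$ by definition of the polar, so $\rho_K(r)\le 1$. For the reverse inclusion I would argue contrapositively. Given $r\notin K$, set $\bar\lambda=\sup\{\lambda\ge 0\st \lambda r\in K\}$; since $0\in\intr(K)$ and $r\notin K$, closedness gives $\bar\lambda\in(0,1)$ and $\bar x:=\bar\lambda r\in\bd(K)$. A supporting hyperplane of $K$ at $\bar x$ yields $y\neq 0$ with $\langle w,y\rangle\le\langle\bar x,y\rangle$ for all $w\in K$; because $0\in\intr(K)$ the value $\langle\bar x,y\rangle$ is strictly positive, so after normalization $\langle\bar x,y\rangle=1$ and $y\in K^*$. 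Since $\bar x\in K$ attains $\langle\bar x,y\rangle=1$, in fact $y\in\hat K$, and then $\langle r,y\rangle=1/\bar\lambda>1$, giving $\rho_K(r)>1$. This proves $\{r\st\rho_K(r)\le 1\}\subseteq K$ and hence equality.

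For minimality, it suffices to show that every $y\in\hat K$ satisfies $\langle r,y\rangle\le\sigma(r)$ for all $r\in\R^n$; taking the supremum over $y\in\hat K$ then gives $\rho_K\le\sigma$. Fix $y\in\hat K$ and a witness $x_0\in K$ with $\langle x_0,y\rangle=1$, noting $\sigma(x_0)\le 1$ because $x_0\in K$. Given an arbitrary $r$, set $c=\sigma(r)$ and use subadditivity and positive homogeneity to bound $\sigma(x_0+\lambda r)\le\sigma(x_0)+\lambda c\le 1+\lambda c$ for $\lambda>0$. Choosing $\lambda$ small enough that $1+\lambda c>0$, positive homogeneity gives $\{z\st\sigma(z)\le 1+\lambda c\}=(1+\lambda c)K$, so $x_0+\lambda r\in(1+\lambda c)K$; since $y\in K^*$ this yields $\langle x_0+\lambda r,y\rangle\le 1+\lambda c$. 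Expanding the left side as $1+\lambda\langle r,y\rangle$ and cancelling, I obtain $\langle r,y\rangle\le c=\sigma(r)$, as required.

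The delicate point is this last step when $\sigma(r)<0$: the scaling set $(1+\lambda c)K$ is only available while $1+\lambda c>0$, which forces $\lambda<1/|c|$; the argument nonetheless goes through because any such $\lambda$ suffices. This is also precisely where the definition of $\hat K$ earns its keep — the identity $\langle x_0,y\rangle=1$ (rather than the mere inequality $\langle x_0,y\rangle\le 1$ available for a general $y\in K^*$) is what lets the constant terms cancel and produces the sharp bound $\langle r,y\rangle\le\sigma(r)$ rather than the weaker $\langle r,y\rangle\le 0$. I expect verifying this cancellation, together with the small-$\lambda$ bookkeeping in the negative case, to be the only real obstacle; the remainder is routine convexity plus the polar facts quoted in the excerpt. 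The degenerate case $K=\R^n$, where $\hat K=\emptyset$ and $\rho_K\equiv-\infty$, can be dispatched separately or excluded.
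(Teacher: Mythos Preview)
The paper does not prove Theorem~\ref{thm:sublinear}; it is quoted from~\cite{BaCoZa} and used as a black box, so there is no in-paper argument to compare against. Your proof is correct: the supporting-hyperplane construction for the representation part and the perturbation $x_0\mapsto x_0+\lambda r$ for the minimality part are both sound, and your handling of the case $\sigma(r)<0$ (choosing $\lambda>0$ small enough that $1+\lambda\sigma(r)>0$) closes the only nontrivial gap. The degenerate case $K=\R^n$ is already excluded by the hypothesis $K\subset\R^n$ if read as strict inclusion, and in any event is harmless since then $\hat K=\emptyset$ and $\rho_K\equiv-\infty$.
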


\begin{remark}\label{rmk:K-poly} Let $K\subset \R^n$ be a polyhedron containing the origin in its interior. Let $a_1,\ldots, a_t\in \R^n$ such that $K=\{r\in \R^n\st a_ir\leq 1,\, i=1,\ldots, t\}$. Then $\rho_K(r)=\max_{i=1,\ldots,t}a_i r$.
\end{remark}
\begin{proof} The polar of $K$ is $K^*=\conv\{0,a_1,\ldots,a_t\}$ (see Theorem 9.1 in Schrijver~\cite{sch}). Furthermore,  $\hat K$ is the union of all the facets of $K^*$ that do not contain the origin, therefore
$$\rho_{K}(r)=\sup_{y\in\hat K}yr=\max_{i=1,\ldots,t}a_ir$$ for all $r\in \R^n$.
\end{proof}

\begin{remark}\label{rmk:valid} Let $B$ be a closed $S$-free convex set in $\R^n$ with $f$ in its interior, and let $K=B-f$. Then $\rho_K$ is a valid function.\end{remark}
\begin{pf} Let $s\in R_{f,S}$. Then $x=f+\sum_{r\in \R^n}rs_r$ is in $S$, therefore $x\notin \intr(B)$ because $B$ is $S$-free. By Theorem~\ref{thm:sublinear}, $\rho_K(x-f)\geq 1$. Thus
$$1\leq \rho_K(\sum_{r\in \R^n}rs_r)\leq \sum_{r\in \R^n}\rho_K(r)s_r,$$
where the second inequality follows from the sublinearity of $\rho_K$.
\end{pf}

\begin{lemma}\label{lemma:max-S-free} Let $C$ be a closed $S$-free convex set containing $f$ in its interior, and let $K=C-f$. There exists a maximal $S$-free convex set $B=\{x\in\R^n\st a_i(x-f)\leq 1,\,\, i=1,\ldots,k\}$ such that $a_i\in\cl(\conv(\hat K))$ for $i=1,\ldots,k$.
\end{lemma}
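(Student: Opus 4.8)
The plan is to start with the closed $S$-free convex set $C$ containing $f$ in its interior and enlarge it to a maximal one while keeping control of the normal vectors. Set $K=C-f$, so $0\in\intr K$. By Theorem~\ref{thm:sublinear}, $C=\{x\st \rho_K(x-f)\le 1\}$ and $\rho_K(r)=\sup_{y\in\hat K}ry$; since $\hat K$ is bounded (it lies in the compact set $K^*$), we have $\rho_K=\rho_{\cl\conv(\hat K)}$, i.e. $C$ is cut out by the inequalities $y(x-f)\le 1$ as $y$ ranges over the closed convex set $Q:=\cl\conv(\hat K)$. So the real content is: any maximal $S$-free enlargement of $C$ can be chosen to have all its facet normals still in $Q$.

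First I would invoke Zorn's lemma to get \emph{some} maximal $S$-free convex set $B\supseteq C$, and use Theorem~\ref{thm:S-free}: either $B$ is a polyhedron as in case $(i)$, or a half-space as in case $(ii)$ (case $(iii)$ is impossible since $C$, hence $B$, has $f$ in its interior, so $B$ is full-dimensional). Thus $B=\{x\st a_i(x-f)\le 1,\ i=1,\dots,k\}$ for finitely many $a_i$. The goal is to show we may take each $a_i\in Q$. The key observation is that for each facet-defining inequality $a_i(x-f)\le 1$ of $B$, the half-space $H_i=\{x\st a_i(x-f)\le 1\}$ contains $C$; hence $a_i\in C^\circ$ translated, i.e. $a_i\in K^*$. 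What we need is the stronger statement $a_i\in\hat K$ (equivalently, the supporting hyperplane $\{a_i(x-f)=1\}$ of $C$ actually \emph{touches} $C$), which would give $a_i\in\hat K\subseteq Q$ directly. If $\{a_i(x-f)=1\}$ does not meet $C$, one can rescale: replace $a_i$ by $\lambda a_i$ with $\lambda=1/\rho_K(a_i)\ge 1$ — wait, that shrinks the half-space — instead push the hyperplane toward $C$ until it is supporting, obtaining $a_i'=a_i/\mu_i$ with $\mu_i=\max\{a_i(x-f)\st x\in C\}\le 1$, so $a_i'\in\hat K$ and $H_i'\supseteq H_i$. Then $B':=\bigcap_i H_i'\supseteq B$ is still convex with $f$ in its interior; by maximality of $B$ we need $B'$ to be $S$-free, which is the crux.

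So the main obstacle, and where I would spend the effort, is proving that after replacing each $a_i$ by the ``pushed-in'' normal $a_i'\in\hat K$, the resulting larger set $B'$ remains $S$-free — because only then does maximality force $B'=B$ and hence $a_i=a_i'\in Q$ after all. The natural approach is the one used repeatedly in the proof of Theorem~\ref{thm:S-free} (e.g.\ Claim~\ref{claim:B-poly}): for each facet normal $a_i$ of $B$ that is \emph{not} already in $\hat K$, note that the slab between $\{a_i(x-f)=1\}$ and its supporting translate $\{a_i'(x-f)=1\}$ contains no point of $S$ in the relevant region — indeed $\conv(S)\subseteq C$ (if this holds in our situation; in general $f\in\conv(S)$ only, so more care is needed) — and argue that replacing $H_i$ by $H_i'$ cannot introduce a point of $S$ into the interior. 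Where $\conv(S)$ is not contained in $C$ one should instead only push in those normals for which the original supporting hyperplane of $C$ can be recovered, and handle the rest by a separation argument exactly as in Claim~\ref{claim:B-poly}: if $y\in S\cap\intr B'$ but $y\notin\intr B$, then $y$ violates some original $a_i(x-f)\le 1$, and since that inequality was only weakened by moving within the $S$-free region $C$, one derives $y\in\intr C$ or $y$ lies in a slab disjoint from $S$, a contradiction. Assembling these, $B'=B$, each $a_i\in\hat K\subseteq\cl(\conv(\hat K))$, and the lemma follows.
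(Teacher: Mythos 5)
There is a genuine gap, and it occurs exactly at the point you identify as the crux. First, a concrete error: after rescaling $a_i'=a_i/\mu_i$ with $\mu_i=\sup_{x\in C}a_i(x-f)\le 1$, the half-space $H_i'=\{x\st a_i'(x-f)\le 1\}=\{x\st a_i(x-f)\le\mu_i\}$ is \emph{contained in} $H_i$, not the reverse. So $B'=\bigcap_i H_i'\subseteq B$; it is trivially $S$-free, but maximality of $B$ says nothing about subsets of $B$, and you cannot conclude $B'=B$ nor that $a_i\in\hat K$. More fundamentally, the strategy of taking an \emph{arbitrary} maximal $S$-free set $B\supseteq C$ and massaging its normals cannot work: when $\rec(C)$ is full-dimensional, $\cl(\conv(\hat K))$ is a \emph{proper} subset of $K^*$ (this is precisely the content of Remark~\ref{rmk:not-full} --- the easy case is when the two coincide), and a maximal $S$-free set containing $C$ can have facets whose supporting hyperplanes never touch $C$ and whose normalized normals lie outside $\cl(\conv(\hat K))$. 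Note also that the lemma's target is $\cl(\conv(\hat K))$, not $\hat K$: the normals one ends up with are convex combinations of supporting normals and in general do not themselves support $C$, so aiming for $a_i\in\hat K$ is too strong.

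The paper's proof goes the other way around. It starts from a maximal $S$-free set $T\supseteq C$ chosen with $\lin(T)$ of largest dimension, normalizes its normals $d_i$ so that $\sup_{x\in C}d_i(x-f)=1$ (giving $d_i\in\cl(\hat K)$), and passes to the \emph{smaller} $S$-free polyhedron $P=\{x\st d_i(x-f)\le 1\}$ --- consistent with the inclusion direction above. Then, after reducing to the case where $P\cap\conv(S)$ is bounded by projecting along $L=\langle\rec(P\cap\conv(S))\rangle$, it enlarges $P$ back to a maximal $S$-free set by iteratively \emph{tilting} a facet with no point of $S$ in its relative interior: the normal $d_1$ is replaced by a convex combination $\bar\lambda d_1+(1-\bar\lambda)d_2$, where $\bar\lambda$ is chosen as the largest value for which the tilted polyhedron is still $S$-free; the threshold $\lambda^*$ below which $P(\lambda)\cap\conv(S)$ becomes unbounded is controlled using Lemma~\ref{lemma:lineality} and the maximal-lineality choice of $T$. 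None of this machinery (the choice of $T$, the boundedness reduction, the computation of $\lambda^*$, the finiteness argument selecting $\bar x$) appears in your sketch, and the separation argument you borrow from Claim~\ref{claim:B-poly} does not substitute for it, since there is no analogue of the lattice-free polyhedron $T$ of that proof playing the role of a certificate here. So the proposal as written does not establish the lemma.
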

\begin{proof}
Since $C$ is an $S$-free convex set, it is contained in some maximal $S$-free convex set $T$. The set $T$ satisfies one of the statements (i)-(iii) of Theorem~\ref{thm:S-free}. By assumption, $f\in \conv(S)$ and $f$ is in the interior of $C$. Since $\dim(S)=n$, $\conv(S)$ is a full dimensional polyhedron, thus $\intr(C\cap \conv(S))\neq\emptyset$. This implies that $\intr(T \cap \conv(S))\neq \emptyset$, hence case (i) applies.

Thus $T$ is a polyhedron and $\rec(T\cap \conv(S))=\lin(T)\cap \rec(\conv(S))$ is rational. Let us choose $T$ such that the dimension of $\lin(T)$ is largest possible.

Since $T$ is a polyhedron containing $f$ in its interior, there exists $D\in\R^{t\times q}$ and $b\in\R^t$ such that $b_i>0$, $i=1,\ldots,t$, and $T=\{x\in \R^n\st D(x-f)\leq b\}$. Without loss of generality, we may assume that $\sup_{x\in C} d_i (x-f)=1$ where $d_i$ denotes the $i$th row of $D$, $i=1,\ldots, t$. By our assumption, $\sup_{r\in K} d_i r=1$. Therefore  $d_i\in K^*$, since $d_i r\leq 1$ for all $r\in K$. Furthermore $d_i\in\cl(\hat K)$, since $\sup_{r\in K} d_i r=1$.

Let $P=\{x\in \R^n\st D(x-f)\leq e\}$. Note that $\lin(P)=\lin(T)$. By our choice of $T$, $P+\langle r\rangle$ is not $S$-free for any $r\in\rec(\conv(S))\sm\lin(P)$, otherwise $P$ would be contained in a maximal $S$-free convex set whose lineality space contains $\lin(T)+\langle r\rangle$, a contradiction.

Let $L=\langle\rec(P\cap \conv(S))\rangle$. Since $\lin(P)=\lin(T)$, $L$ is a rational space. Note that $L\subseteq\lin(P)$, implying that $d_i\in L^\bot$ for $i=1,\ldots,t$.
\medskip

We observe next that we may assume that $P\cap\conv(S)$ is bounded. Indeed, let $\bar P$, $\bar S$, $\Lambda$ be the projections onto $L^\bot$ of $P$, $S$, and $\Z^n$, respectively. Since $L$ is a rational space, $\Lambda$ is a lattice of $L^\bot$ and $\bar S=\conv(\bar S)\cap \Lambda$. Note that $\bar P\cap \conv(\bar S)$ is bounded, since $L\supseteq \rec(P\cap\conv(S))$. If we are given a maximal $\bar S$-free convex set $\bar B$ in $L^\bot$ such that $\bar B= \{x\in L^\bot\st a_i(x-f)\leq 1,\, i=1,\ldots,h\}$ and $a_i\in\conv\{d_1,\ldots,d_t\}$ for $i=1,\ldots,h$, then $B=\bar B+L$ is the set $B= \{x\in \R^n\st a_i(x-f)\leq 1,\, i=1,\ldots,h\}$. Since $\bar B$ contains a point of $\bar S$ in the relative interior of each of its facets, $B$ contains a point of $S$ in the relative interior of each of its facets, thus $B$ is a maximal $S$-free convex set.
\medskip

Thus we assume that $P\cap\conv(S)$ is bounded, so $\dim(L)=0$. If all facets of $P$ contain a point of $S$ in their relative interior, then $P$ is a maximal $S$-free convex set, thus the statement of the lemma holds. Otherwise we describe a procedure that replaces one of the inequalities defining a facet of $P$ without any point of $S$ in its relative interior with an inequality which is a convex combination of the inequalities of $D(x-f)\leq e$, such that the new polyhedron thus obtained is $S$-free and has one fewer facet without points of $S$ in its relative interior. More formally, suppose the facet of $P$ defined by $d_1 (x-f)\leq 1$ does not contain any point of $S$ in its relative interior. Given $\lambda\in [0,1]$, let
$$P(\lambda)=\{x\in\R^n\st [\lambda d_1+(1-\lambda)d_2](x-f)\leq 1,\quad d_i(x-f)\leq 1\;\; i=2,\ldots, t\}.$$

Note that $P(1)=P$ and $P(0)$ is obtained from $P$ by removing the inequality $d_1(x-f)\leq 1$. Furthermore, given $0\leq \lambda'\leq\lambda''\leq 1$, we have $P(\lambda')\supseteq P(\lambda'')$.

Let $r_1,\ldots, r_m$ be generators of $\rec(\conv(S))$. Note that, since $P\cap\conv(S)$ is bounded, for every $j=1,\ldots,m$ there exists $i\in\{1,\ldots,t\}$ such that $d_ir_j>0$. Let $r_1,\ldots,r_h$ be the generators of $\rec(\conv(S))$ satisfying
\begin{eqnarray*}
d_1 r_j >0 &&\\
d_i r_j \leq 0 && i=2,\ldots, t.
\end{eqnarray*}
Note that, if no such generators exist, then $P(0)\cap \conv(S)$ is bounded. Otherwise  $P(\lambda)\cap \conv(S)$ is bounded if and only if, for $j=1,\ldots,h$
$$[\lambda d_1+(1-\lambda)d_2]r_j>0.$$
This is the case if and only if $\lambda>\lambda^*$, where $$\lambda^*=\max_{j=1,\ldots,h} \frac{-d_2r_j}{(d_1-d_2)r_j}.$$
Let $r^*$ be one of the vectors $r_1,\ldots,r_h$ attaining the maximum in the previous equation. Then $r^*\in\rec(P(\lambda^*\cap\conv(S)$.

Note that $P(\lambda^*)$ is not $S$-free otherwise $P(\lambda^*)+\langle r^*\rangle $ is $S$-free by Lemma~\ref{lemma:lineality}, and so is $P+\langle r^*\rangle $, a contradiction.

Thus  $P(\lambda^*)$  contains a point of $S$ in its interior. That is, there exists a point $\bar x\in S$ such that $[\lambda^* d_1+(1-\lambda^*)d_2](\bar x-f)<1$ and $d_i(\bar x-f)<1$ for $i=2,\ldots,t$. Since $P$ is $S$-free, $d_1(\bar x-f)>1$. Thus there exists $\bar\lambda>\lambda^*$ such that $[\bar \lambda d_1+(1-\bar \lambda)d_2](\bar x-f)=1$. Note that, since $P(\bar \lambda)\cap \conv(S)$ is bounded, there is a finite number of points of $S$ in the interior of $P(\bar\lambda)$. So we may choose $\bar x$ such that $\bar\lambda$ is maximum. Thus $P(\bar \lambda)$ is $S$-free and $\bar x$ is in the relative interior of the facet of $P(\bar \lambda)$ defined by $[\bar \lambda d_1+(1-\bar \lambda)d_2](x-f)\leq 1$.

Note that, for $i=2,\ldots, t$, if $d_i(x-f)\leq 1$ defines a facet of $P$ with a point of $S$ in its relative interior, then it also defines a facet of $P(\bar\lambda)$ with a point of $S$ in its relative interior, because $P\subset P(\bar\lambda)$. Thus repeating the above construction at most $t$ times, we obtain a set $B$ satisfying the lemma.
\end{proof}

\begin{remark}\label{rmk:not-full} Let $C$ and $K$ be as in Lemma~\ref{lemma:max-S-free}. Given any maximal $S$-free convex set $B=\{x\in\R^n\st a_i(x-f)\leq 1,\,\, i=1,\ldots,k\}$ containing $C$, then $a_1,\ldots,a_k\in K^*$. If $\rec(C)$ is not full dimensional, then the origin is not an extreme point of $K^*$. Since all extreme points of $K^*$ are contained in $\{0\}\cup\hat K$, in this case $\cl(\conv(\hat K))=K^*$. Therefore, when $\rec(C)$ is not full dimensional, every maximal $S$-free convex set containing $C$ satisfies the statement of  Lemma~\ref{lemma:max-S-free}.
\end{remark}

\bigskip

\begin{proof}[Proof of Theorem~\ref{thm:min-ineq-intr}]$\,$

\noindent We first show that any valid function is dominated by a function of the form $\psi_B$, for some maximal $S$-free convex set $B$ containing $f$ in its interior.

Let $\psi$ be a  valid function. By Lemma~\ref{lemma:sublinear}, we may
assume that $\psi$ is sublinear.
Let $K=\{r\in \R^n\st \psi(r)\leq 1\}$, and let $\hat K$ be defined
as in~(\ref{eq:set-T}). Note that $K=B_\psi-f$.  Thus, by
Remark~\ref{rmk:valid}, $\sum_{r\in \R^n}\rho_K(r)s_r\geq 1$ is
valid for $R_{f,S}$. Since $\psi$ is sublinear, it follows from
Theorem~\ref{thm:sublinear} that $\rho_K(r)\leq \psi(r)$ for every
$r\in \R^n$.

By Lemma~\ref{lemma:max-S-free}, there exists a maximal $S$-free
convex set $B=\{x\in\R^n\st a_i(x-f)\leq 1,\,\, i=1,\ldots,k\}$ such
that $a_i\in\cl(\conv(\hat K))$ for $i=1,\ldots,k$.

Then
$$\psi(r)\geq \rho_K(r)=\sup_{y\in\hat K}yr=\max_{y\in\cl(\conv(\hat K))} yr\geq \max_{i=1,\ldots,k}a_ir=\psi_B(r).$$
This shows that $\psi_B$ dominates $\psi$ for all $r\in\R^n$.
\bigskip

To complete the proof of the theorem, we need to show that, given a maximal $S$-free convex set $B$, the function $\psi_B$ is minimal. Consider any valid function $\psi$  dominating $\psi_B$. Then $B_\psi\supseteq B$ and $B_\psi$ is $S$-free. By maximality of $B$, $B=B_\psi$. By Theorem~\ref{thm:sublinear} and Remark~\ref{rmk:K-poly}, $\psi_B(r)\leq\psi(r)$ for all $r\in \R^n$, proving $\psi=\psi_B$.

\end{proof}

\begin{figure}[htbp]\label{fig:facet-tilting}
\centering
\includegraphics[height=3.0in]{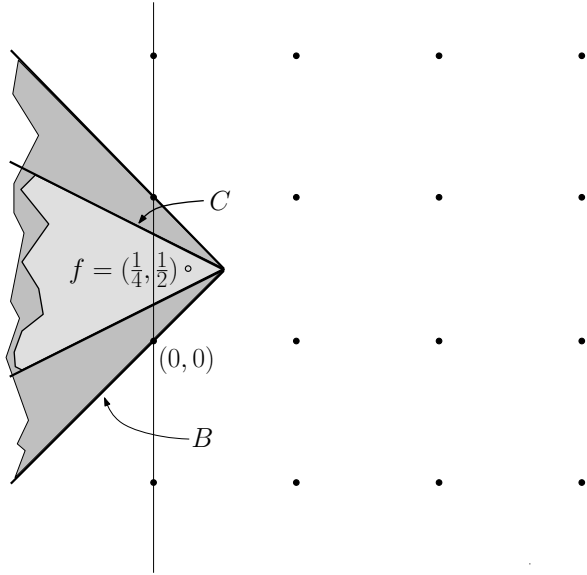}
\caption{Illustration for Example~\ref{example:ex1}}
\end{figure}

\begin{example}\label{example:ex1}
We illustrate the ideas behind the proof in the following
two-dimensional example. Consider $f = (\frac{1}{4},\frac{1}{2})$
and $S =\{(x_1, x_2) \st x_1 \geq 0)\}$. See
Figure~\ref{fig:facet-tilting}. Then the function $\psi(r) =
\max\{4r_1+8r_2, 4r_1 - 8r_2\}$ is a valid linear inequality for
$R_{f,S}$. The corresponding $B_\psi$ is $\{(x_1, x_2) \st 4(x_1 -
\frac{1}{4}) +8(x_2 - \frac{1}{2}) \leq 1, 4(x_1-\frac{1}{4})-8(x_2
- \frac{1}{2})\leq 1\}$. Note that $B_\psi$ is not a maximal
$S$-free convex set and it corresponds to $C$ in
Lemma~\ref{lemma:max-S-free}. Following the procedure outlined in
the proof, we obtain the maximal $S$-free convex set $B = \{(x_1,
x_2) \st 4(x_1 - \frac{1}{4}) + 4(x_2 - \frac{1}{2}) \leq 1,
4(x_1-\frac{1}{4})- 4(x_2 - \frac{1}{2})\leq 1\}$. Then, $\psi_B(r)
= \max\{4r_1+4r_2,4r_1 - 4r_2\}$ and $\psi_B$ dominates $\psi$.
\end{example}

\begin{remark}\label{rmk:psi>0} Note that $\psi$ is nonnegative if and only if $\rec(B_\psi)$ is not full-dimensional. It follows from Remark~\ref{rmk:not-full} that, for every maximal $S$-free convex set $B$ containing $B_\psi$, we have $\psi_B(r)\leq\psi(r)$ for every $r\in\R^n$ when $\psi$ is nonnegative.
\end{remark}

A statement similar to the one of Theorem~\ref{thm:min-ineq-intr} was shown by  Borozan-Cornu\'ejols~\cite{BorCor} for a model similar to~\eqref{eq:RfS} when $S=\Z^n$ and the vectors $s$ are elements of $\R^{\Q^n}$. In this case, it is easy to show that, for every valid inequality $\sum_{r\in\Q^n}\psi(r)s_r\geq 1$, the function  $\psi:\,\Q^n\rightarrow\R$ is nonnegative. Remark~\ref{rmk:psi>0} explains why in this context it is much easier to prove that minimal inequalities arise from maximal lattice-free convex sets.

\end{document}